\numberwithin{equation}{section}
\numberwithin{figure}{section}
\theoremstyle{plain}
\newtheorem{thm}{\protect\theoremname}[section]
  \theoremstyle{plain}
  \newtheorem{lem}[thm]{\protect\lemmaname}
  \theoremstyle{definition}
  \newtheorem{defn}[thm]{\protect\definitionname}
  \theoremstyle{remark}
  \newtheorem{rem}[thm]{\protect\remarkname}
  \providecommand{\definitionname}{Definition}
  \providecommand{\lemmaname}{Lemma}
  \providecommand{\remarkname}{Remark}
\providecommand{\theoremname}{Theorem}
\begin{document}

\title{A stochastic Pontryagin maximum principle on the Sierpinski gasket}

\author{Xuan~Liu\thanks{Nomura International, 30/FL Two International Finance Centre, Hong
Kong. Email: \protect\href{mailto:chamonixliu@163.com}{chamonixliu@163.com}.\protect \\
This research was carried out when the author was reading DPhil in
Mathematics at the University of Oxford.}}
\maketitle
\begin{abstract}
In this paper, we consider stochastic control problems on the Sierpinski
gasket. An order comparison lemma is derived using heat kernel estimate
for Brownian motion on the gasket. Using the order comparison lemma
and techniques of BSDEs, we establish a Pontryagin stochastic maximum
principle for these control problems. It turns out that the stochastic
maximum principle on the Sierpinski gasket involves two necessity
equations in contrast to its counterpart on Euclidean spaces. This
effect is due to singularity between the Hausdorff measure and the
energy dominant measure on the gasket, which is a common feature shared
by many fractal spaces. The linear regulator problems on the gasket
is also considered as an example. 
\end{abstract}

\section{Introduction}

Recently, to study non-linear analysis on the Sierpinski gasket, \citep{LQ16}
developed a theory of backward stochastic differential equations (BSDEs)
on the Sierpinski gasket. BSDEs and related stochastic analysis on
fractals, though initially considered as efficient tools to treat
quasi-linear parabolic PDEs on fractals, also have interests on their
own from a mathematical finance point of view. Several interesting
mathematical finance problems are formulated as stochastic control
problems on Euclidean spaces, which are based upon the assumption
that uncertainties in financial models are sourced from Brownian filtration
on Euclidean spaces. However, it had been widely observed from the
real data that many financial time series exhibit fractal behaviours
(see, for example, \citep{AIV02,Man13,DW14} and etc.), which suggests
the possibility that uncertainties in the markets might come from
filtrations exhibiting fractal structures. Therefore, it is of significance
to consider stochastic control problems for controlled systems with
noise coming from filtrations determined by the diffusions on fractals. 

The motivation of this paper is to establish a stochastic Pontryagin
maximum principle for stochastic control problems on the Sierpinski
gasket, with uncertainties in the controlled dynamic systems generated
by the diffusion on the gasket. It turns out that, in contrast to
its counterpart on Euclidean spaces, the stochastic maximum principle
on the gasket consists of two necessity equations rather than a single
one (see \citep{Pen90} and \citep[Section 3.2]{Yong99}). As we shall
see, this is due to the singularity between two measures which are
both necessary for analysis on fractals. 

This paper is organized as follows. In Section \ref{sec:-1}, we introduce
notations which will be enforced throughout this paper, and review
some related results in literature. The main results of this paper
is formulated and collected in Section \ref{sec:-2}. Section \ref{sec:-3}
is devoted to the proof of the stochastic maximum principle on the
Sierpinski gasket. The linear regulator problem on the gasket is considered
in Section \ref{sec:} as an example. Though results of this paper
are established for two-dimensional Sierpinski gasket, we however
believe that our results also hold for higher-dimensional cases, where
argument in this paper should remain valid. 

\section{\label{sec:-1}Notations and related results}

In this section, we introduce notations which will be enforced throughout
this paper. We also review several results in literature needed in
the following sections.

Let $\mathrm{V}_{0}=\{p_{1},p_{2},p_{3}\}\subseteq\mathbb{R}^{2}$
with $p_{1}=(0,0),\;p_{2}=(1,0),\;p_{3}=(\tfrac{1}{2},\tfrac{\sqrt{3}}{2})$,
and $\mathrm{F}_{i}:\mathbb{R}^{2}\to\mathbb{R}^{2},\;i=1,2,3$ be
the contraction mappings given by $\mathrm{F}_{i}(x)=\tfrac{1}{2}(x+p_{i}),\;x\in\mathbb{R}^{2},\;i=1,2,3.$
Define $\mathrm{V}_{m},\;m\in\mathbb{N}$ inductively by $\mathrm{V}_{m+1}=\mathrm{F}_{1}(\mathrm{V}_{m})\cup\mathrm{F}_{2}(\mathrm{V}_{m})\cup\mathrm{F}_{3}(\mathrm{V}_{m}),\;m\in\mathbb{N}$,
and $\mathrm{V}_{\ast}=\bigcup_{m=0}^{\infty}\mathrm{V}_{m}$. The
(two-dimensional) \emph{Sierpinski gasket} is defined to be the closure
$\mathbb{S}=\bar{\mathrm{V}}_{\ast}$ of $\mathrm{V}_{\ast}$ in $\mathbb{R}^{2}$. 

For a given set $\mathrm{V}$, we denote by $\ell(\mathrm{V})$ the
space of all real-valued functions on $\mathrm{V}$. The \emph{standard
Dirichlet form} $(\mathcal{E},\mathcal{F}(\mathbb{S}))$ on the Sierpinski
gasket $\mathbb{S}$ is defined by
\[
\left\{ \begin{aligned}\mathcal{E}(u,v) & =\lim_{m\to\infty}\mathcal{E}^{(m)}(u,v),\;\;u,v\in\mathcal{F}(\mathbb{S}),\\
\mathcal{F}(\mathbb{S}) & =\Big\{ u\in\ell(\mathbb{S}):\lim_{m\to\infty}\mathcal{E}^{(m)}(u,u)<\infty\Big\},
\end{aligned}
\right.
\]
where the forms $\mathcal{E}^{(m)},\;m\in\mathbb{N}$ are given by
\[
\mathcal{E}^{(m)}(u,v)=\Big(\frac{5}{3}\Big)^{m}\sum_{x,y\in\mathrm{V}_{m}:|x-y|=2^{-m}}[u(x)-u(y)][v(x)-v(y)],\;\;u,v\in\ell(\mathrm{V}_{m}).
\]

Let $\nu$ be the \emph{Hausdorff measure} on $\mathbb{S}$ with weight
$(\frac{1}{3},\frac{1}{3},\frac{1}{3})$, that is, $\nu$ is the unique
Borel probability measure on $\mathbb{S}$ such that $\nu(\mathbb{S}_{[\omega]_{m}})=3^{-m}$
for each $\omega\in\tilde{\Omega}$ and each $m\in\mathbb{N}$. Then
the form $\mathcal{E}$ is a regular Dirichlet form on $L^{2}(\mathbb{S};\nu)$,
and $\mathcal{F}(\mathbb{S})$ is the corresponding Dirichlet space. 

The \emph{Kusuoka measure} $\mu$ on $\mathbb{S}$ is defined by $\mu=(\mu_{1}+\mu_{2}+\mu_{3})/3$,
where $\mu_{i}$ is the energy measures of the harmonic function with
boundary value $1_{p_{i}}$, which is the unique minimizer of $\inf\{\mathcal{E}(u,u):u\in\mathcal{F}(\mathbb{S})\allowbreak\;\text{and}\allowbreak\;u|_{\mathrm{V}_{0}}=1_{p_{i}}\}$.

According to the general theory of Dirichlet forms and Markov processes
(see \citep[Chapter 7]{FOT10}), associated to the form $(\mathcal{E},\mathcal{F}(\mathbb{S}))$
there exists a standard Hunt process $\mathbf{M}=\big(\Omega,\mathcal{F},\{X_{t}\}_{t\in[0,\infty]},\allowbreak\{\mathbb{P}_{x}\}_{x\in\mathbb{S}\cup\{\Delta\}}\big)$
with state space $\mathbb{S}$, where $\Delta$ is the ``cemetery''
of $\mathbf{M}$. The process $\{X_{t}\}_{t\ge0}$ is called \emph{Brownian
motion }on $\mathbb{S}$. The semigroup of $\{X_{t}\}_{t\ge0}$ will
be denoted by $\{P_{t}\}_{t\ge0}$.

Let $\mathcal{P}(\mathbb{S})$ be the family of all Borel probability
measures on $\mathbb{S}$. For each $\lambda\in\mathcal{P}(\mathbb{S})$,
the probability measure $\mathbb{P}_{\lambda}$ on $\Omega$ is defined
by $\mathbb{P}_{\lambda}(A)=\int_{\mathbb{S}}\mathbb{P}_{x}(A)\lambda(dx),\;A\in\mathcal{F}$.
The expectation with respect to $\mathbb{P}_{\lambda}$ will be denoted
by $\mathbb{E}_{\lambda}$. Let $\mathcal{F}_{t}^{0}=\sigma\left(X_{r}:r\le t\right),\;t\ge0$,
$\mathcal{F}_{t}^{\lambda}$ the $\mathbb{P}_{\lambda}$-completion
of $\mathcal{F}_{t}^{0}$ in $\mathcal{F}$, and $\{\mathcal{F}_{t}\}_{t\ge0}$
the minimal completed admissible filtration (cf. \citep[p. 385]{FOT10})
of $\{X_{t}\}_{t\ge0}$, that is, $\mathcal{F}_{t}=\bigcap_{\lambda\in\mathcal{P}(\mathbb{S})}\mathcal{F}_{t}^{\lambda},\;t\ge0$.

We end this section with a review on the representing martingale on
the Sierpinski gasket. The following result was first shown in \citep[Theorem (5.4)]{Ku89}
(see also \citep[Theorem 2.6]{LQ16}).
\begin{thm}
\label{thm:-}There exists a martingale additive functional $W_{t}$
satisfying the following:

(i) $W_{t}$ has $\mu$ as its energy measure; 

(ii) For any $u\in\mathcal{F}(\mathbb{S})$, there exists a unique
$\zeta\in L^{2}(\mathbb{S};\mu)$ such that 
\begin{equation}
M_{t}^{[u]}=\int_{0}^{t}\zeta(X_{r})dW_{r},\;\;\text{for all}\ t\ge0,\label{eq:-21}
\end{equation}
where $M^{[u]}$ is the martingale part of $u(X_{t})-u(X_{0})$.
\end{thm}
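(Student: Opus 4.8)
The plan is to build $W$ from the martingale parts of the three harmonic coordinate functions and to reduce the representation in (ii) to the fact that the Dirichlet form $(\mathcal{E},\mathcal{F}(\mathbb{S}))$ has \emph{martingale dimension one}. First I would invoke the Fukushima decomposition: for every $u\in\mathcal{F}(\mathbb{S})$ the additive functional $u(X_t)-u(X_0)$ splits as $M^{[u]}_t+N^{[u]}_t$, where $M^{[u]}$ is a martingale additive functional of finite energy and $N^{[u]}$ is a continuous additive functional of zero energy, and where the predictable quadratic variation $\langle M^{[u]}\rangle$ is the positive continuous additive functional whose Revuz measure is the energy measure $\mu_{\langle u\rangle}$ of $u$. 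Polarisation produces the mutual energy measures $\mu_{\langle u,v\rangle}$, the Revuz measures of the covariations $\langle M^{[u]},M^{[v]}\rangle$; the whole construction thus reduces to finding a single $W$ against which every $M^{[u]}$ integrates.

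For the construction of $W$, let $h_1,h_2,h_3\in\mathcal{F}(\mathbb{S})$ be the harmonic functions with $h_i|_{\mathrm{V}_0}=1_{p_i}$. Since $h_1+h_2+h_3\equiv1$ is harmonic, $M^{[h_1]}+M^{[h_2]}+M^{[h_3]}\equiv0$, so the vector martingale $(M^{[h_1]},M^{[h_2]},M^{[h_3]})$ takes values in the plane $\{\,x:\sum_i x_i=0\,\}$. The matrix-valued set function $(\mu_{\langle h_i,h_j\rangle})_{i,j}$ is symmetric and nonnegative definite, and its trace is $\sum_i\mu_{\langle h_i\rangle}=\sum_i\mu_i=3\mu$; in particular each $\mu_{\langle h_i,h_j\rangle}$ is absolutely continuous with respect to $\mu$, and writing $Z_{ij}=d\mu_{\langle h_i,h_j\rangle}/d\mu$ yields a symmetric nonnegative matrix field $Z(x)$ with $\operatorname{tr}Z=3$ for $\mu$-a.e.\ $x$.

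The key step, and the principal obstacle, is Kusuoka's theorem that $Z(x)$ has rank one for $\mu$-a.e.\ $x$ --- equivalently, that the martingale dimension of the form equals one. I would prove this from the self-similarity of the gasket: under each contraction $\mathrm{F}_i$ the harmonic functions transform by an explicit linear ``harmonic extension'' map $A_i$ acting on boundary values, so the density field $Z$ satisfies a fixed-point identity under the induced transfer operator on nonnegative matrices. Iterating this identity along an address $\omega=(\omega_1,\omega_2,\dots)$, the normalised matrices are pushed toward a rank-one limit because the products $A_{\omega_1}\cdots A_{\omega_m}$ align asymptotically with a single dominant direction, a Furstenberg/Oseledets-type statement reflecting a strictly positive gap in the Lyapunov spectrum of the $A_i$. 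This collapse of the energy density to rank one --- the point at which the gasket departs from Euclidean space --- is the heart of the argument and the step I expect to be hardest.

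Granting rank one, the proof concludes as follows. At $\mu$-a.e.\ $x$ write $Z(x)=3\,e(x)e(x)^{\top}$ with $e(x)$ a unit vector in the plane $\{\sum_i x_i=0\}$, and define
\[
W_t=\frac{1}{\sqrt{3}}\int_0^t\sum_{i=1}^3 e_i(X_r)\,dM^{[h_i]}_r.
\]
The Revuz measure of $\langle W\rangle$ then has density $\tfrac13\,e^{\top}Ze=(e^{\top}e)^2=1$ with respect to $\mu$, which is property (i). For (ii), set $\zeta:=d\mu_{\langle u,W\rangle}/d\mu$; since $\langle W\rangle$ has Revuz measure $\mu$ and (by Kusuoka) $\mu_{\langle u\rangle}\ll\mu$, martingale dimension one forces equality in the fibrewise Cauchy--Schwarz inequality, giving $d\mu_{\langle u\rangle}/d\mu=\zeta^2$ $\mu$-a.e. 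Hence $D_t:=M^{[u]}_t-\int_0^t\zeta(X_r)\,dW_r$ has $\langle D\rangle$ with Revuz density $\zeta^2-2\zeta^2+\zeta^2=0$, so $D\equiv0$ and $M^{[u]}_t=\int_0^t\zeta(X_r)\,dW_r$. Finally $\zeta\in L^2(\mathbb{S};\mu)$ because $\int\zeta^2\,d\mu=\mu_{\langle u\rangle}(\mathbb{S})<\infty$, and its uniqueness is the It\^{o} isometry $\mathbb{E}_\lambda\big[(\int_0^t\zeta\,dW)^2\big]=\mathbb{E}_\lambda\big[\int_0^t\zeta^2\,d\langle W\rangle\big]$ combined with $\langle W\rangle$ having Revuz measure $\mu$.
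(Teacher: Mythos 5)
The paper does not prove this statement at all: it is quoted as a known result, with the proof deferred to Kusuoka (1989), Theorem (5.4) (see also Liu--Qian, Theorem 2.6). What you have written is, in outline, a faithful reconstruction of Kusuoka's original argument: Fukushima decomposition, the matrix density $Z=\big(d\mu_{\langle h_i,h_j\rangle}/d\mu\big)_{ij}$ of the harmonic coordinates with $\operatorname{tr}Z=3$, the rank-one collapse of $Z$ (martingale dimension one) via the self-similar transfer identity and a Lyapunov spectral gap for the harmonic extension matrices $A_i$, and finally the construction of $W$ as $\tfrac{1}{\sqrt3}\int e_i(X_r)\,dM^{[h_i]}_r$ together with the $\langle D\rangle=0$ trick to get the representation. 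Your bookkeeping is correct where it is explicit: $\sum_i M^{[h_i]}=0$, absolute continuity of the off-diagonal energy measures by Cauchy--Schwarz, the Revuz density $\tfrac13 e^{\top}Ze=1$ for $\langle W\rangle$, and the uniqueness of $\zeta$ via the It\^o isometry.

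That said, the proposal leans on two assertions that carry essentially all of the weight and are only gestured at. First, the rank-one statement itself: you correctly identify the Furstenberg/Oseledets mechanism, but the strict positivity of the gap in the Lyapunov spectrum of the $A_i$ (restricted to the plane $\{\sum_i x_i=0\}$) is a genuine computation specific to the gasket, not a soft consequence of self-similarity; without it nothing follows. Second, the passage from ``$Z$ has rank one for the three harmonic coordinates'' to the representation for \emph{every} $u\in\mathcal{F}(\mathbb{S})$ is compressed into the phrase ``martingale dimension one forces equality in the fibrewise Cauchy--Schwarz inequality,'' which comes close to assuming what is to be proved. The missing step is an approximation argument: piecewise-harmonic functions are dense in $\mathcal{F}(\mathbb{S})$ in the $\mathcal{E}_1$-norm, their energy measures are controlled fibrewise by iterates of $Z$, and one passes to the limit to conclude both that $\mu_{\langle u\rangle}\ll\mu$ and that $d\mu_{\langle u\rangle}/d\mu=\big(d\mu_{\langle u,W\rangle}/d\mu\big)^2$ for general $u$. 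Both gaps are filled in Kusuoka's paper, so as a reconstruction of the cited proof your sketch is on target; as a self-contained proof it is not yet complete.
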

The martingale additive functional $W$ given by (\ref{eq:-21}) is
called the \emph{Brownian martingale} on $\mathbb{S}$. The following
result on the singularity between the Lebesgue-Stieltjes measure induced
by $t\mapsto\langle W\rangle_{t}$ and the Lebesgue measure on $[0,\infty)$
was proved in \citep[Lemma 4.10]{LQ16}.
\begin{lem}
\label{lem:-33}The Lebesgue-Stieltjes measure $d\langle W\rangle_{t}(\omega)$
is singular to the Lebesgue measure $dt$ on $[0,\infty)$ $\mathbb{P}_{\nu}\mbox{-a.e.}\;\omega\in\Omega$.
\end{lem}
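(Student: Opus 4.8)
The plan is to reduce the claimed singularity in time to the singularity between the Kusuoka measure $\mu$ and the Hausdorff measure $\nu$ on $\mathbb{S}$, and then transport it through the Revuz correspondence for positive continuous additive functionals (PCAFs) of $\mathbf{M}$.

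First I would identify the two PCAFs in play together with their Revuz measures. By Theorem~\ref{thm:-}(i) and the definition of the energy measure, the quadratic variation $t\mapsto\langle W\rangle_t$ is precisely the PCAF whose Revuz measure is $\mu$. On the other hand, the PCAF $t\mapsto t$ has Revuz measure equal to the reference measure $\nu$, since $\lim_{h\downarrow0}h^{-1}\mathbb{E}_\nu\big[\int_0^h f(X_s)\,ds\big]=\int_{\mathbb{S}}f\,d\nu$ for bounded Borel $f$. Thus $d\langle W\rangle_t\leftrightarrow\mu$ and $dt\leftrightarrow\nu$ under the Revuz bijection.

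The essential input is that $\mu$ and $\nu$ are mutually singular on $\mathbb{S}$, which is Kusuoka's theorem in \citep{Ku89}. Fix a Borel set $B\subseteq\mathbb{S}$ with $\nu(B)=0$ and $\mu(\mathbb{S}\setminus B)=0$. I then invoke the functoriality of the Revuz correspondence: for nonnegative Borel $f$, the PCAF $\int_0^{\cdot}f(X_s)\,dA_s$ has Revuz measure $f\,dm_A$. Taking $A=\langle W\rangle$ and $f=\mathbf 1_{\mathbb{S}\setminus B}$ produces a PCAF with Revuz measure $\mathbf 1_{\mathbb{S}\setminus B}\,\mu=0$; since the only PCAF with null Revuz measure is the identically-zero one (up to $\mathbb{P}_\nu$-indistinguishability), this yields $\langle W\rangle_t=\int_0^t\mathbf 1_B(X_s)\,d\langle W\rangle_s$ for all $t$, $\mathbb{P}_\nu$-a.s. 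Symmetrically, taking $A_t=t$ and $f=\mathbf 1_B$ gives Revuz measure $\mathbf 1_B\,\nu=0$, hence $\int_0^t\mathbf 1_B(X_s)\,ds=0$ for all $t$, $\mathbb{P}_\nu$-a.s.

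Finally I would assemble the two identities. For $\mathbb{P}_\nu$-a.e. $\omega$, put $N(\omega)=\{s\ge0:X_s(\omega)\in B\}$. The second identity says $N(\omega)$ is Lebesgue-null, while the first says the Lebesgue--Stieltjes measure $d\langle W\rangle_t(\omega)$ assigns zero mass to $N(\omega)^{c}$, i.e. it is carried by the null set $N(\omega)$. A measure supported on a Lebesgue-null set is singular to $dt$, which is exactly the assertion of Lemma~\ref{lem:-33}. The one genuinely nontrivial step is the measure singularity $\mu\perp\nu$: this is a purely fractal effect (on $\mathbb{R}^d$ the energy measure of Brownian motion is a constant multiple of Lebesgue measure, so no such splitting arises), and I would take it as an external input from \citep{Ku89}; everything downstream is routine PCAF bookkeeping that converts singularity of measures on the state space $\mathbb{S}$ into singularity of measures in time.
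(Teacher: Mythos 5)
Your argument is correct: the reduction to Kusuoka's theorem that $\mu\perp\nu$, followed by the Revuz-correspondence bookkeeping (a PCAF with null Revuz measure vanishes, so $d\langle W\rangle_{\cdot}(\omega)$ is carried by $\{s:X_{s}(\omega)\in B\}$ while that set is Lebesgue-null), is exactly the mechanism behind this lemma. Note that the paper itself offers no proof but imports the statement from \citep[Lemma 4.10]{LQ16}, and your route coincides in substance with that cited argument, the only genuinely nontrivial input being the singularity $\mu\perp\nu$ from \citep{Ku89}.
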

The following lemma, which is shown in \citep[Lemma 4.11]{LQ16},
gives the exponential integrability of $\langle W\rangle_{t}$. 
\begin{lem}
\label{lem:-3}For each $f\in L_{+}^{1}(\mu)$ and $\kappa,t>0$,
\[
\sup_{x\in\mathbb{S}}\mathbb{E}_{x}\big(f(X_{t})e^{\kappa\langle W\rangle_{t}}\big)\le\max\{1,t^{-d_{s}/2}\}\Vert f\Vert_{L^{1}(\mu)}\mathrm{E}_{\gamma_{s},\gamma_{s}}[C_{\ast}\kappa\max\{t,t^{\gamma_{s}}\}],
\]
where $C_{\ast}>0$ is a universal constant.
\end{lem}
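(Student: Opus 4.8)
The plan is to use the nonnegativity of $f$ and of $\langle W\rangle_t$ to expand the exponential as a power series and to estimate the moments of the additive functional one order at a time. By monotone convergence,
\[
\mathbb{E}_x\big(f(X_t)e^{\kappa\langle W\rangle_t}\big)=\sum_{n=0}^{\infty}\frac{\kappa^{n}}{n!}\,\mathbb{E}_x\big(f(X_t)(\langle W\rangle_t)^{n}\big),
\]
so it suffices to bound each summand uniformly in $x\in\mathbb{S}$. By Theorem~\ref{thm:-}, $\langle W\rangle$ is the positive continuous additive functional whose Revuz measure is the Kusuoka measure $\mu$. Writing the $n$-th power as an iterated integral over the time simplex, $(\langle W\rangle_t)^{n}=n!\int_{0<s_1<\cdots<s_n<t}d\langle W\rangle_{s_1}\cdots d\langle W\rangle_{s_n}$, and applying the Markov property successively at $s_1<\cdots<s_n$ together with the Revuz correspondence, each increment $d\langle W\rangle_{s_i}$ is converted into an integration of the transition density against $\mu$, and the terminal factor $f(X_t)$ into one further application of the semigroup $P_{t-s_n}$.

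Two heat-kernel inputs then drive the estimate. First, the on-diagonal upper bound $p_s(x,y)\le C\max\{1,s^{-d_s/2}\}$, valid on the compact gasket for all $s>0$, together with the finiteness of $\mu$, gives $\sup_{x}\int_{\mathbb{S}}p_s(x,y)\,\mu(dy)\le C\max\{1,s^{-d_s/2}\}=C\max\{1,s^{\gamma_s-1}\}$, where $\gamma_s=1-\tfrac{d_s}{2}\in(0,1)$; this controls each interior increment and is integrable near $s=0$. Second, the terminal application is estimated by $P_{r}f(y)\le C\max\{1,r^{-d_s/2}\}\|f\|_{L^1(\mu)}=C\max\{1,r^{\gamma_s-1}\}\|f\|_{L^1(\mu)}$, which is the step that ties the $L^1$-norm to the Kusuoka measure $\mu$.

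Feeding these bounds into the space--time integral and factoring out $\|f\|_{L^1(\mu)}$ leaves an elementary Dirichlet-type integral over the simplex: with $u_i=s_i-s_{i-1}$ the consecutive gaps ($s_0=0$) and $u_{n+1}=t-s_n$, one is reduced to $\int_{0<s_1<\cdots<s_n<t}\prod_{i=1}^{n+1}\max\{1,u_i^{\gamma_s-1}\}\,ds$. For $t\le 1$ the singular parts dominate and the Beta integral gives $\Gamma(\gamma_s)^{n+1}t^{(n+1)\gamma_s-1}/\Gamma\big((n+1)\gamma_s\big)$; since $\gamma_s-1=-\tfrac{d_s}{2}$ this equals $t^{-d_s/2}(t^{\gamma_s})^{n}\Gamma(\gamma_s)^{n+1}/\Gamma(\gamma_s(n+1))$, while for $t\ge 1$ the constant parts dominate and produce $t^{\,n}$ with prefactor of order $1$, the two regimes combining into $\max\{1,t^{-d_s/2}\}(\max\{t,t^{\gamma_s}\})^{n}/\Gamma(\gamma_s(n+1))$ up to a constant $C_\ast^{\,n}$. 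Hence the $n$-th summand is at most $\max\{1,t^{-d_s/2}\}\,\|f\|_{L^1(\mu)}\,(C_\ast\kappa)^{n}(\max\{t,t^{\gamma_s}\})^{n}/\Gamma(\gamma_s(n+1))$, and summing over $n$ recognizes the series defining $\mathrm{E}_{\gamma_s,\gamma_s}\big(C_\ast\kappa\max\{t,t^{\gamma_s}\}\big)$, which is the claimed bound.

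I expect the main obstacle to lie in the second heat-kernel input rather than in the combinatorics. The delicate point is to control the transition density integrated against the Kusuoka measure $\mu$ uniformly in the base point $x$, and in particular to produce the terminal factor in terms of $\|f\|_{L^1(\mu)}$ together with the correct exponent $\gamma_s$: this is precisely where the singularity of $\mu$ with respect to the Hausdorff measure $\nu$---equivalently, the singularity of $d\langle W\rangle_t$ with respect to $dt$ recorded in Lemma~\ref{lem:-33}---enters, and it is what forces the anomalous order $\gamma_s=1-\tfrac{d_s}{2}$ in the Mittag-Leffler function in place of the order $1$ one would obtain on a Euclidean space. The remaining interchanges of summation, expectation and integration are routine, being justified throughout by the nonnegativity of the integrand and monotone convergence.
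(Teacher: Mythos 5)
The paper does not actually prove this lemma: it is quoted verbatim from \citep[Lemma 4.11]{LQ16}, so there is no in-text argument to compare yours against. That said, your Khas'minskii-type expansion is precisely the machinery the paper itself deploys in the proof of Lemma \ref{lem:-1}: the iterated-integral representation (\ref{eq:-39}) of powers of $\langle W\rangle_t$, the conditional Revuz identity of \citep[Lemma 4.17]{LQ16} converting each increment $d\langle W\rangle_{s_i}$ into an integral of $(P_{\cdot}\mu)(X_{s_{i-1}})$ in time, and the on-diagonal bound (\ref{eq:-53}) giving $\Vert P_s\mu\Vert_{L^{\infty}}\le C_{\ast}\max\{1,s^{-d_s/2}\}$. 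Your Dirichlet--Beta computation over the simplex, producing $\Gamma(\gamma_s)^{n+1}t^{(n+1)\gamma_s-1}/\Gamma((n+1)\gamma_s)$ and hence the series for $\mathrm{E}_{\gamma_s,\gamma_s}$ with argument $C_{\ast}\kappa\max\{t,t^{\gamma_s}\}$ and prefactor $\max\{1,t^{-d_s/2}\}$, is the only calculation that yields a bound of the stated shape, so your route is surely the intended one.

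One caution about the step you yourself single out as delicate. The $n$ interior increments genuinely integrate the kernel against $\mu$ because $\mu$ is the Revuz measure of $\langle W\rangle$; that part is watertight. The terminal factor, however, is $P_{t-s_n}f(X_{s_n})=\int p_{t-s_n}(X_{s_n},z)f(z)\,\nu(dz)$ under the paper's convention that $p_t$ is the transition density with respect to the Hausdorff measure $\nu$, and the heat-kernel bound then yields $\Vert f\Vert_{L^{1}(\nu)}$, not $\Vert f\Vert_{L^{1}(\mu)}$. The inequality $P_rf(y)\le C\max\{1,r^{-d_s/2}\}\Vert f\Vert_{L^{1}(\mu)}$ you assert cannot hold with the $\nu$-density: take $f=1_{A}$ with $\mu(A)=0<\nu(A)$ (possible since $\mu\perp\nu$), which annihilates the right-hand side while the lower heat-kernel bound in (\ref{eq:-53}) keeps $\mathbb{E}_x f(X_t)$ strictly positive. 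So either the statement, as transcribed from the source, intends the density taken with respect to $\mu$ --- in which case your step is exactly right --- or the norm should read $L^{1}(\nu)$. The discrepancy lives in the statement rather than in your combinatorics, but you must pin down the reference measure of the semigroup before the terminal bound can be trusted.
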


\section{\label{sec:-2}Formulation of the main result}

Let $\lambda\in\mathcal{P}(\mathbb{S})$ satisfy $\lambda\ll\nu$.
Let the \emph{decision space} $(\mathbb{U},\rho)$ be a separable
metric space. Let $h:\mathbb{R}\to\mathbb{R},\;\allowbreak f_{1}:[0,T]\times\mathbb{R}\times\mathbb{U}\to\mathbb{R},\;\allowbreak f_{2}:[0,T]\times\mathbb{R}\times\mathbb{U}\to\mathbb{R}$
be Borel measurable functions. For any $\mathbb{U}\text{-}$valued
progressively measurable process $u(t)$, we introduce the \emph{cost
functional }
\begin{equation}
J(u)\triangleq\mathbb{E}_{\lambda}\bigg(h(x(T))+\int_{0}^{T}f_{1}(t,x(t),u(t))dt+\int_{0}^{T}f_{2}(t,x(t),u(t))d\langle W\rangle_{t}\bigg),\label{eq:-5}
\end{equation}
for the \emph{controlled system} $x(t)$ of which the dynamics is
given by the following SDE on $\big(\Omega,\mathcal{F},\{\mathcal{F}_{t}^{\lambda}\}_{t\ge0},\mathbb{P}_{\lambda}\big)$:
\begin{equation}
\left\{ \begin{aligned}dx(t) & =b_{1}(t,x(t),u(t))dt+b_{2}(t,x(t),u(t))d\langle W\rangle_{t}\\
 & \quad+\sigma(t,x(t),u(t))dW_{t},\quad t\in(0,T],\;\mathbb{P}_{\lambda}\text{-a.s.},\\
x(0) & =x_{0},
\end{aligned}
\right.\label{eq:-7}
\end{equation}
where $\varphi:[0,T]\times\mathbb{R}\times\mathbb{U}\to\mathbb{R},\;\varphi=b_{1},b_{2},\sigma$
are Borel measurable functions, and $x_{0}\in\mathcal{F}_{0}^{\lambda}$.\footnote{The existence and uniqueness of solutions to (\ref{eq:-7}) can be
easily shown by an a priori estimate similar to \citep[eqn. (3.8), p. 8]{LQ16}.}
\begin{defn}
Denote by $\mathcal{A}[0,T]$ the family of all $\mathbb{U}\text{-}$valued
processes $u(t)$ such that
\begin{equation}
\mathbb{E}_{\lambda}\bigg(|h(x(T))|+\int_{0}^{T}|f_{1}(t,x(t),u(t))|dt+\int_{0}^{T}|f_{2}(t,x(t),u(t))|d\langle W\rangle_{t}\bigg)<\infty,\label{eq:-6}
\end{equation}
where $x(t)$ is the controlled process given by (\ref{eq:-7}). Any
$u\in\mathcal{A}[0,T]$ is called an \emph{admissible control}, and
$(x(\cdot),u(\cdot))$ is called an \emph{admissible pair}.
\end{defn}
We consider the following optimization problem
\begin{equation}
\mathop{\text{minimize}}_{u\in\mathcal{A}[0,T]}\;J(u),\tag{P}\label{eq:-8}
\end{equation}
subject to the controlled dynamics (\ref{eq:-7}). To formulate our
result, we shall need the following definition. 
\begin{defn}
\label{def:-1}We define the measure $\mathfrak{M}_{1}$ on $[0,\infty)\times\Omega$
to be
\begin{equation}
\mathfrak{M}_{1}=dt\times\mathbb{P}_{\lambda},\label{eq:-36}
\end{equation}
and the measure $\mathfrak{M}_{2}$ to be the unique measure on the
optional $\sigma\text{-}$field\footnote{That is, the $\sigma\text{-}$field on $[0,\infty)\times\Omega$ generated
by the family of all right continuous left limit processes.} on $[0,\infty)\times\Omega$ such that
\begin{equation}
\mathfrak{M}_{2}\big(\llbracket\sigma_{1},\sigma_{2}\rrparenthesis\big)=\mathbb{E}_{\lambda}\big(\langle W\rangle_{\sigma_{2}}-\langle W\rangle_{\sigma_{1}}\big),\label{eq:-37}
\end{equation}
for any $\{\mathcal{F}_{t}\}\text{-}$stopping times $\sigma_{1},\sigma_{2}$
with $\sigma_{1}\le\sigma_{2}$, where $\llbracket\sigma_{1},\sigma_{2}\rrparenthesis=\{(t,\omega)\in[0,\infty)\times\Omega:\sigma_{1}(\omega)\le t<\sigma_{2}(\omega)\}$.
\end{defn}
\begin{rem}
\label{rem:}By $\lambda\ll\nu$ and Lemma \ref{lem:-33}, the measures
$\mathfrak{M}_{1}$ and $\mathfrak{M}_{2}$ are mutually singular.
\end{rem}
\begin{thm}
\label{thm:}Let $\lambda\in\mathcal{P}(\mathbb{S})$ be absolutely
continuous with respect to $\nu$. Assume that:\\
$\hypertarget{A.1}{\text{(A.1)}}$
\[
\left\{ \begin{aligned}|\varphi(t,x,u)-\varphi(t,\hat{x},\hat{u})| & \le M|x-\hat{x}|+\rho(u,\hat{u}), & t & \in[0,T],\;x,\hat{x}\in\mathbb{R},\;u,\hat{u}\in\mathbb{U},\\
|\varphi(t,0,u)| & \le M, & t & \in[0,T],\;u\in\mathbb{U},
\end{aligned}
\right.
\]
for $\varphi=b_{1},b_{2},\sigma,f_{1},f_{2},h$, and\\
$\hypertarget{A.2}{\text{(A.2)}}$
\[
\begin{aligned}|\partial_{x}\varphi(t,x,u) & -\partial_{x}\varphi(t,\hat{x},\hat{u})|+|\partial_{x}^{2}\varphi(t,x,u)-\partial_{x}^{2}\varphi(t,\hat{x},\hat{u})|\\
 & \le M|x-\hat{x}|+\rho(u,\hat{u}),\;\;t\in[0,T],\;x,\hat{x}\in\mathbb{R},\;u,\hat{u}\in\mathbb{U},
\end{aligned}
\]
for $\varphi=b_{1},b_{2},\sigma,f_{1},f_{2},h$, where $M>0$ is a
constant.

Suppose that $(\bar{x}(\cdot),\bar{u}(\cdot))$ is a solution to  (\ref{eq:-8}).
Let $(p(\cdot),q(\cdot))$ and $(P(\cdot),Q(\cdot))$ be the solutions
of the \emph{adjoint equations}
\begin{equation}
\left\{ \begin{aligned}dp(t) & =-[\partial_{x}b_{1}(t)p(t)-\partial_{x}f_{1}(t)]dt\\
 & \quad-[\partial_{x}b_{2}(t)p(t)+\partial_{x}\sigma(t)q(t)-\partial_{x}f_{2}(t)]d\langle W\rangle_{t}\\
 & \quad+q(t)dW_{t},\qquad t\in[0,T],\;\mathbb{P}_{\lambda}\text{-a.s.},\\
p(T) & =-\partial_{x}h(\bar{x}(T)),
\end{aligned}
\right.\label{eq:-16}
\end{equation}
and
\begin{equation}
\left\{ \begin{aligned}dP(t) & =-[2\partial_{x}b_{1}(t)P(t)+\partial_{x}^{2}b_{1}(t)p(t)-\partial_{x}^{2}f_{1}(t)]dt\\
 & \quad-\big[\big(2\partial_{x}b_{2}(t)+\partial_{x}\sigma(t)^{2}\big)P(t)+\partial_{x}\sigma(t)Q(t)+\partial_{x}^{2}b_{2}(t)p(t)+\partial_{x}^{2}\sigma(t)q(t)-\partial_{x}^{2}f_{2}(t)\big]d\langle W\rangle_{t}\\
 & \quad+Q(t)dW_{t},\qquad t\in[0,T],\;\mathbb{P}_{\lambda}\text{-a.s.},\\
P(T) & =-\partial_{x}^{2}h(\bar{x}(T)),
\end{aligned}
\right.\label{eq:-17}
\end{equation}
and let $H_{1}(t,x,u),\;H_{2}(t,x,u)$ be the \emph{Hamiltonians}
defined by
\[
H_{1}(t,x,u)\triangleq b_{1}(t,x,u)p(t)-f_{1}(t,x,u),
\]
\[
H_{2}(t,x,u)\triangleq b_{2}(t,x,u)p(t)+\sigma(t,x,u)q(t)-f_{2}(t,x,u)+\frac{1}{2}[\sigma(t,x,u)-\sigma(t,x,\bar{u}(t))]^{2}P(t).
\]
Then
\begin{equation}
\left\{ \begin{aligned}H_{1}(t,\bar{x}(t),\bar{u}(t)) & =\max_{u\in\mathbb{U}}H_{1}(t,\bar{x}(t),u), & \mathfrak{M}_{1}\text{-a.e.},\\
H_{2}(t,\bar{x}(t),\bar{u}(t)) & =\max_{u\in\mathbb{U}}H_{2}(t,\bar{x}(t),u), & \mathfrak{M}_{2}\text{-a.e.},
\end{aligned}
\right.\label{eq:-38}
\end{equation}
\end{thm}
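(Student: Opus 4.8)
The plan is to adapt Peng's second-order spike-variation method (\citep{Pen90}, \citep[Section 3.2]{Yong99}), the new ingredient being that the mutual singularity of $\mathfrak{M}_1$ and $\mathfrak{M}_2$ (Remark \ref{rem:}) decouples the two clocks $dt$ and $d\langle W\rangle_t$ and thereby produces two separate maximum conditions. Fix $u\in\mathbb{U}$ and write $\delta\varphi(t)=\varphi(t,\bar{x}(t),u)-\varphi(t,\bar{x}(t),\bar{u}(t))$ for $\varphi=b_1,b_2,\sigma,f_1,f_2$, and likewise $\delta H_i(t)=H_i(t,\bar{x}(t),u)-H_i(t,\bar{x}(t),\bar{u}(t))$. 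For a measurable spike set $E\subseteq[0,T]$ (in practice a stopping-time interval $\llbracket\tau,\tau+\varepsilon\rrparenthesis$) let $u^{\varepsilon}=u$ on $E$ and $u^{\varepsilon}=\bar{u}$ off $E$, with corresponding state $x^{\varepsilon}$, and exploit $J(u^{\varepsilon})\ge J(\bar{u})$. Since $d\langle W\rangle_t\perp dt$ by Lemma \ref{lem:-33}, for $\mathbb{P}_{\lambda}$-a.e. $\omega$ the interval $[0,T]$ splits into a carrier of $dt$ and a carrier of $d\langle W\rangle_t$; I would place the spike set (up to negligible error) on one carrier or the other according to which of the two conditions is being proved.

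First I would expand $x^{\varepsilon}-\bar{x}=y+z+r$. The first-order variation $y$ solves the linear equation
\[
dy=\partial_x b_1(t)\,y\,dt+\partial_x b_2(t)\,y\,d\langle W\rangle_t+\big[\partial_x\sigma(t)\,y+\delta\sigma(t)\,1_E\big]\,dW_t,\quad y(0)=0,
\]
so that the diffusion spike $\delta\sigma\,1_E$ enters at first order, while the drift spikes $\delta b_1\,1_E$, $\delta b_2\,1_E$ and the quadratic terms $\tfrac12\partial_x^2 b_i\,y^2$, $\tfrac12\partial_x^2\sigma\,y^2$, $\delta(\partial_x\sigma)\,y\,1_E$ drive the second-order variation $z$. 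The delicate point is that all of the a priori estimates controlling $y$, $z$ and the remainder $r$ must be carried out against the \emph{random} clock $\langle W\rangle_t$ rather than against $t$; the Gr\"onwall arguments close because the exponential integrability of $\langle W\rangle_t$ furnished by Lemma \ref{lem:-3} bounds $\mathbb{E}_{\lambda}\sup_t|y|^2$ and $\mathbb{E}_{\lambda}\sup_t|z|^2$ and forces $\mathbb{E}_{\lambda}\sup_t|r|^2$ to be of genuinely lower order. Pinning down the correct orders in the $dt$- and $d\langle W\rangle_t$-measures of $E$ is precisely where a comparison between the two clocks is needed.

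Next I would dualize. Applying It\^o's formula to $p(t)\big(y(t)+z(t)\big)$ and to $P(t)y(t)^2$ and invoking the adjoint BSDEs \eqref{eq:-16} and \eqref{eq:-17}, the terms linear in $y$ and $z$ and quadratic in $y$ cancel against the variation dynamics, while the terminal conditions $p(T)=-\partial_x h(\bar{x}(T))$, $P(T)=-\partial_x^2 h(\bar{x}(T))$ match the Taylor expansion of $h(x^{\varepsilon}(T))$. Collecting the surviving spike contributions leaves, up to a remainder of lower order,
\[
0\le J(u^{\varepsilon})-J(\bar{u})=-\mathbb{E}_{\lambda}\int_{E}\big[\delta H_1(t)\,dt+\delta H_2(t)\,d\langle W\rangle_t\big]+R_{E},
\]
where the term $\tfrac12[\sigma(t,x,u)-\sigma(t,x,\bar{u}(t))]^2P(t)$ inside $H_2$ is exactly the contribution of the $P y^2$ dualization. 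The mechanism producing two conditions is now visible: on the $dt$-carrier the quadratic variation $\int_E\sigma^2\,d\langle W\rangle_t$ is negligible, so $y\approx0$, the second-order correction and the $q,Q,\sigma$ contributions drop out, and only $\delta H_1$ survives; on the $d\langle W\rangle_t$-carrier the $dt$-contribution is negligible but the diffusion spike drives a genuine first-order variation of order $(\Delta\langle W\rangle)^{1/2}$, so the full $H_2$ with its $P$-correction appears.

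Finally I would localize the two parts separately. A Lebesgue differentiation argument with respect to $\mathfrak{M}_1=dt\times\mathbb{P}_{\lambda}$, shrinking $E$ around density points of $dt$, yields $H_1(t,\bar{x}(t),\bar{u}(t))\ge H_1(t,\bar{x}(t),u)$ $\mathfrak{M}_1$-a.e., and the analogous differentiation against the singular optional measure $\mathfrak{M}_2$ yields the $H_2$ condition $\mathfrak{M}_2$-a.e.; a separability reduction in $u\in\mathbb{U}$ then makes both hold simultaneously for all $u$, giving \eqref{eq:-38}. I expect the main obstacle to be the second-order variational estimate together with the localization, both carried out against the random, $dt$-singular clock $\langle W\rangle_t$: on Euclidean space the diffusion's quadratic variation over a short interval is deterministically of order $\varepsilon$, making these steps routine, whereas here one must control the variation processes through a random clock using Lemma \ref{lem:-3} in place of deterministic bounds, and must run Lebesgue differentiation against the singular measure $\mathfrak{M}_2$. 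Making this rigorous is, I believe, exactly the role of the order comparison lemma announced in the abstract, and it is the crux of the argument.
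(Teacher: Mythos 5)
Your outline reproduces the paper's argument essentially step for step: the same spike variation (the paper implements your ``place the spike on one carrier or the other'' by fixing disjoint optional carriers $S_{1},S_{2}$ of $\mathfrak{M}_{1},\mathfrak{M}_{2}$, perturbing to $u_{1}$ on $(I_{\epsilon}\times\Omega)\cap S_{1}$ and to $u_{2}$ on $(I_{\epsilon}\times\Omega)\cap S_{2}$, and then taking $u_{2}=\bar{u}$ resp. $u_{1}=\bar{u}$ to isolate each necessity condition), the same first- and second-order variational equations with $\delta\sigma$ entering $y^{\epsilon}$ through the $dW$ term, the same duality against \eqref{eq:-16} and \eqref{eq:-17} leading to $J(u^{\epsilon})-J(\bar{u})=-\mathbb{E}_{\lambda}\int(\delta H_{1}\,dt+\delta H_{2}\,d\langle W\rangle_{t})+R$, and the same explanation of why the $\tfrac{1}{2}(\delta\sigma)^{2}P$ correction survives only on the $\langle W\rangle$-clock.

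The one genuine gap is precisely the ingredient you flag as the crux and then defer. To make the remainder $R$ genuinely lower order you need the quantitative comparison of moments of the random clock: with $m_{k,\lambda}(I_{\epsilon};E)=\mathbb{E}_{\lambda}[(\int_{I_{\epsilon}}1_{E}\,d\langle W\rangle_{t})^{k}]$, one must show $m_{k+1,\lambda}(I_{\epsilon};E)\le C(k+1)\,|I_{\epsilon}|^{1-d_{s}/2}\,m_{k,\lambda}(I_{\epsilon};E)$, so that every higher moment is $\mathrm{o}$ of every lower one as $|I_{\epsilon}|\to0$ (this is \eqref{eq:-23}). Lemma \ref{lem:-3} cannot deliver this: exponential integrability gives moment bounds uniform in $\epsilon$ but no decay rate in $|I_{\epsilon}|$, and without a rate you cannot conclude $R=\mathrm{o}(m_{1,\lambda}(I_{\epsilon};E))$, which is what the whole scheme hinges on. The paper proves the comparison (Lemma \ref{lem:-1}) by writing $m_{k+1,\lambda}$ as an iterated integral, conditioning on $\mathcal{F}_{t_{k}}^{\lambda}$, and bounding $\mathbb{E}_{\lambda}\big(\int_{t_{k}}^{\infty}1_{I_{\epsilon}}\,d\langle W\rangle\,\big|\,\mathcal{F}_{t_{k}}^{\lambda}\big)=\int 1_{I_{\epsilon}}(s)(P_{s-t_{k}}\mu)(X_{t_{k}})\,ds\le C_{\ast}|I_{\epsilon}|^{1-d_{s}/2}$ using the heat-kernel estimate $p_{t}(x,y)\le C_{\ast}\max\{1,t^{-d_{s}/2}\}$ together with $d_{s}<2$; the heat kernel, which you never invoke, is indispensable here. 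A second, smaller point: for the final localization the paper does not Lebesgue-differentiate against the optional measure $\mathfrak{M}_{2}$ (which is not of product form, so that step would itself require justification); instead, for each $a>0$ it perturbs only on the set $E_{a}$ where $H_{2}(t,\bar{x}(t),\bar{u}_{2}(t))-H_{2}(t,\bar{x}(t),\bar{u}(t))\ge a$ and reads off $\mathfrak{M}_{2}(E_{a})=0$ directly from $0\le J(u^{\epsilon})-J(\bar{u})\le-a\,m_{1,\lambda}(I_{\epsilon};E_{a})+\mathrm{o}(m_{1,\lambda}(I_{\epsilon};E_{a}))$; you would do well to adopt this device rather than the differentiation you sketch.
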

\begin{rem}
\label{rem:-2}(i) Notice that the assumptions $\hyperlink{A.1}{\text{(A.1)}}$
and $\hyperlink{A.2}{\text{(A.2)}}$ imply that $\varphi,\partial_{x}\varphi,\partial_{x}^{2}\varphi$
are uniformly bounded for $\varphi=b_{1},b_{2},\sigma,f_{1},f_{2},h$.
Indeed, the assumption $\hyperlink{A.1}{\text{(A.1)}}$ implies the
uniform boundedness of $\varphi$. The boundedness of $\partial_{x}\varphi(t,0,u)$
for $(t,u)$ can also be deduced from $\hyperlink{A.1}{\text{(A.1)}}$
with $\hat{u}=u$, which together with $\hyperlink{A.2}{\text{(A.2)}}$
implies the uniform boundedness of $\partial_{x}\varphi$. Similarly,
$\partial_{x}^{2}\varphi$ is also uniformly bounded. 

(ii) The adjoint equations (\ref{eq:-16}) and (\ref{eq:-17}) are
introduced in order to reduce the general case with a non-trivial
$h(x(T))$ in the cost functional $J(u)$ to the one without an $h(x(T))$
term. In other words, it transforms the cost $h(x(T))$ at terminal
time into a cumulative cost over the interval $[0,T]$. This can be
seen more clearly from the proof of Theorem \ref{thm:}.
\end{rem}

\section{\label{sec:-3}Proof of the stochastic maximum principle}

In this section, we prove Theorem \ref{thm:} for the optimization
problem (\ref{eq:-8}) on the Sierpinski gasket. Our argument is based
on the idea of approximation and duality used in the paper \citep{Pen90}
and the monograph \citep{Yong99} for classical Euclidean setting,
while overcoming some difficulties concerning the driver martingale
$W$ on the Sierpinski gasket. More specifically, as we shall see,
a crucial ingredient of our argument is an order comparison lemma
(Lemma \ref{lem:-1}), which is needed for stochastic Taylor expansions.
Another technical lemma crucial to the proof of Theorem \ref{thm:}
is Lemma \ref{lem:-2}, which gives the orders of approximation errors.
\begin{defn}
\label{def:}Let $\lambda\in\mathcal{P}(\mathbb{S}),\,k\ge1$ and
$E\in\mathcal{B}([0,\infty)\times\Omega)$ be a progressively measurable
set. For each $I\in\mathcal{B}([0,\infty))$, we denote
\[
m_{k,\lambda}(I;E)=\mathbb{E}_{\lambda}\Big[\Big(\int_{I}1_{E}(t,\omega)d\langle W\rangle_{t}\Big)^{k}\Big].
\]
Clearly, the map $I\mapsto|I|+m_{1,\lambda}(I;\Omega)$ is a Borel
measure on $\mathcal{B}([0,\infty))$, where $|\cdot|$ is the one-dimensional
Lebesgue measure. We denote by $\mathcal{B}_{\lambda}([0,\infty))$
the completion of $\mathcal{B}([0,\infty))$ with respect to the measure
$|\cdot|+m_{1,\lambda}(\cdot\,;\Omega)$.
\end{defn}
\begin{lem}
\label{lem:-1}Let $\lambda\in\mathcal{P}(\mathbb{S})$, and $E\in\mathcal{B}([0,\infty)\times\Omega)$
be a progressively measurable set. Let $\{I_{\epsilon}\}_{\epsilon>0}$
be a family of $\mathcal{B}_{\lambda}([0,\infty))$-measurable subsets
of $[0,\infty)$ such that $\lim_{\epsilon\to0}|I_{\epsilon}|=0$.
Then, for some universal constant $C_{\ast}>0$, 
\begin{equation}
m_{k+1,\lambda}(I_{\epsilon};E)\le C_{\ast}(k+1)\,|I_{\epsilon}|^{1-d_{s}/2}m_{k,\lambda}(I_{\epsilon};E),\;\;\text{for all}\;k\in\mathbb{N}_{+}.\label{eq:-23}
\end{equation}
In particular, 
\begin{equation}
m_{l,\lambda}(I_{\epsilon};E)=\mathrm{o}(m_{k,\lambda}(I_{\epsilon};E)),\;\;\text{as}\;\epsilon\to0,\label{eq:-4}
\end{equation}
for all $k\in\mathbb{N}_{+}$ and $l>k$. 
\end{lem}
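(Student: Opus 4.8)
The plan is to prove, by induction on $k$, a uniform strengthening of (\ref{eq:-23}): for every $x\in\mathbb{S}$, every Borel set $J\subseteq[0,\infty)$ with $|J|\le\delta$, and every progressively measurable set $E'$,
\[
\mathbb{E}_x\Big[\Big(\int_J 1_{E'}\,d\langle W\rangle\Big)^{k}\Big]\le C_{\ast}\,k\,\delta^{1-d_s/2}\,\mathbb{E}_x\Big[\Big(\int_J 1_{E'}\,d\langle W\rangle\Big)^{k-1}\Big].
\]
Writing $F=\int_J 1_{E'}\,d\langle W\rangle$ and $\varphi_t=\int_{J\cap[0,t]}1_{E'}\,d\langle W\rangle$, the starting point is the pathwise identity $F^{k+1}=(k+1)\int_0^\infty (F-\varphi_t)^k\,d\varphi_t$, valid because $\varphi$ is continuous increasing with $\varphi_0=0$ and $\varphi_\infty=F$. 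Taking $\mathbb{E}_x$ and using the optional-projection rule for integrals against the continuous additive functional $\langle W\rangle$, I replace the future functional $(F-\varphi_t)^k$ by its $\mathcal{F}_t$-conditional expectation; the Markov property together with the additive-functional identity $\langle W\rangle_{t+s}-\langle W\rangle_t=\langle W\rangle_s\circ\theta_t$ (shift operators $\theta_t$) then expresses this conditional expectation as $\mathbb{E}_{X_t}\big[(\int_{J_t}1_{E'_t}\,d\langle W\rangle)^k\big]$ for the shifted set $J_t=(J-t)\cap[0,\infty)$, of measure $\le\delta$, and a shifted progressive set $E'_t$.

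Applying the inductive hypothesis at the starting point $X_t$ bounds this by $C_{\ast}k\,\delta^{1-d_s/2}\,\mathbb{E}_x[(F-\varphi_t)^{k-1}\mid\mathcal{F}_t]$; undoing the projection and using the companion identity $\int_0^\infty(F-\varphi_t)^{k-1}\,d\varphi_t=F^k/k$ collapses everything to $C_{\ast}(k+1)\delta^{1-d_s/2}\,\mathbb{E}_x[F^k]$, which is the uniform inequality for $k+1$. Reading the case $k+1$ with $J=I_\epsilon$, $E'=E$ and $\delta=|I_\epsilon|$, then integrating over the initial law $\lambda$ (the constant is independent of the starting point), yields (\ref{eq:-23}). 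Finally (\ref{eq:-4}) follows by iterating (\ref{eq:-23}) from $k$ up to $l$, which produces the factor $C_{\ast}^{\,l-k}(l!/k!)\,|I_\epsilon|^{(l-k)(1-d_s/2)}$; since $d_s<2$ the exponent $1-d_s/2$ is positive, so this tends to $0$ as $\epsilon\to0$ and gives $m_{l,\lambda}(I_\epsilon;E)=\mathrm{o}(m_{k,\lambda}(I_\epsilon;E))$.

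The whole induction rests on the base case $k=1$, the first-moment bound $\sup_x\sup_{|J|\le\delta}\mathbb{E}_x[\int_J d\langle W\rangle]\le C_{\ast}\delta^{1-d_s/2}$. For this I would use that $\langle W\rangle$ is the positive continuous additive functional whose Revuz measure is the Kusuoka measure $\mu$ (Theorem \ref{thm:-}(i)), so that $\mathbb{E}_x[\langle W\rangle_r]=\int_0^r\!\int_{\mathbb{S}}p_s(x,y)\,\mu(dy)\,ds$, where $p_s$ is the heat kernel of $\{P_t\}$ with respect to $\nu$; hence the expected occupation measure $\mathbb{E}_x[d\langle W\rangle_r]$ is absolutely continuous in $r$ with density $\rho_x(r)=\int_{\mathbb{S}}p_r(x,y)\,\mu(dy)$. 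The on-diagonal heat-kernel upper bound $p_r(x,y)\le C_{\ast}r^{-d_s/2}$ (the very ingredient behind the prefactor $\max\{1,t^{-d_s/2}\}$ in Lemma \ref{lem:-3}) gives $\rho_x(r)\le C_{\ast}r^{-d_s/2}$ uniformly in $x$, and since $r^{-d_s/2}$ is decreasing the Hardy--Littlewood rearrangement inequality bounds $\int_J\rho_x(r)\,dr$ by $\int_0^{|J|}C_{\ast}r^{-d_s/2}\,dr=\tfrac{C_{\ast}}{1-d_s/2}\,\delta^{1-d_s/2}$, the integral converging precisely because $d_s<2$.

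I expect the main obstacle to be the measure-theoretic bookkeeping in the inductive step rather than any single estimate: justifying the passage from $\int H_t\,d\langle W\rangle_t$ to $\int{}^{o}H_t\,d\langle W\rangle_t$ for the merely progressive integrand $H_t=1_J(t)1_{E'}(t)(F-\varphi_t)^k$, and then verifying that the shifted object $E'_t$ is again a progressive set to which the induction hypothesis genuinely applies, with the measure constraint $|J_t|\le\delta$ preserved under the shift. Once these points are secured, the recursion is purely algebraic and the single analytic input is the first-moment estimate above.
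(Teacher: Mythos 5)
Your argument rests on the same two pillars as the paper's: an iterated conditioning of the moment $\mathbb{E}[(\int 1_{I_\epsilon}1_E\,d\langle W\rangle)^{k+1}]$ via the Markov property, and the first-moment heat-kernel bound $\sup_x\mathbb{E}_x[\int_J d\langle W\rangle]=\sup_x\int_J P_r\mu(x)\,dr\le C_\ast|J|^{1-d_s/2}$ (your base case is computed exactly as in the paper, which splits $J$ into $[t_k,t_k+|J|]$ and its complement rather than invoking rearrangement \textendash{} same estimate). The difference is organizational, and it matters. The paper writes the $(k+1)$-st moment as $(k+1)!$ times an ordered iterated integral and conditions \emph{only the innermost (last-time) integral} on $\mathcal{F}_{t_k}$, after first bounding the progressive indicator by the deterministic one, $\phi_\epsilon(t_{k+1},\omega)\le 1_{I_\epsilon}(t_{k+1})$. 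The conditional expectation then involves a deterministic time set only, so the Markov property applies in its simplest form and no shifted progressive set ever appears. Your version instead conditions the full future $k$-th power $(F-\varphi_t)^k$ and feeds $\mathbb{E}_{X_t}[\,\cdot\,]$ back into a uniform inductive hypothesis. That forces you to justify $\mathbb{E}_x[(F-\varphi_t)^k\mid\mathcal{F}_t]=\mathbb{E}_{X_t}[(\int_{J_t}1_{E'_t}d\langle W\rangle)^k]$ for an integrand $1_{E'}(s,\omega)$, $s>t$, that depends on the pre-$t$ path; this needs a concatenation/regular-conditional-probability argument showing that, for a.e.\ frozen pre-$t$ path, the section of $E'$ is again progressive for the shifted filtration, and the additive-functional identity $\langle W\rangle_{t+s}-\langle W\rangle_t=\langle W\rangle_s\circ\theta_t$. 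You correctly flag this as the main obstacle; it is doable, but it is precisely the complication the paper's ordering of the conditioning is designed to avoid. Your recursion, once that step is secured, does collapse algebraically to $C_\ast(k+1)\delta^{1-d_s/2}\mathbb{E}_x[F^k]$ as claimed.

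One genuine omission: your derivation of (\ref{eq:-4}) treats only integer $l$. The lemma asserts it for all real $l>k$ (and the paper uses it later with non-integer exponents such as $2pk$), so you need the additional interpolation step $m_{k+\theta,\lambda}(I_\epsilon;E)\le m_{k,\lambda}(I_\epsilon;E)^{1-\theta}m_{k+1,\lambda}(I_\epsilon;E)^{\theta}$, $\theta\in(0,1)$, which is H\"older's inequality; with it, the integer-case asymptotics transfer to all real $l>k$.
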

\begin{proof}
Let $\phi_{\epsilon}(t)=\phi_{\epsilon}(t,\omega)=1_{I_{\epsilon}}(t)1_{E}(t,\omega)$.
Then, for each $\epsilon>0$, $\phi_{\epsilon}$ is a bounded progressively
measurable process. Clearly, we have the following iterated integral
representation 
\begin{equation}
\mathbb{E}_{\lambda}\Big[\Big(\int_{0}^{\infty}\phi_{\epsilon}(t)d\langle W\rangle_{t}\Big)^{k}\Big]=\mathbb{E}_{\lambda}\Big[k!\int_{0<t_{1}<\cdots<t_{k}<\infty}\phi_{\epsilon}(t_{1})\cdots\phi_{\epsilon}(t_{k})d\langle W\rangle_{t_{1}}\cdots d\langle W\rangle_{t_{k}}\Big].\label{eq:-39}
\end{equation}
Since $\phi_{\epsilon}$ is progressively measurable, we have $\phi_{\epsilon}(t)\in\mathcal{F}_{t}^{\lambda}$.
Therefore, by (\ref{eq:-39}) and the tower property, 
\begin{equation}
\begin{aligned} & \;\quad\mathbb{E}_{\lambda}\Big[\Big(\int_{0}^{\infty}\phi_{\epsilon}(t)d\langle W\rangle_{t}\Big)^{k+1}\Big]\\
 & =\mathbb{E}_{\lambda}\Big[(k+1)!\int_{0<t_{1}<\cdots<t_{k+1}<\infty}\phi_{\epsilon}(t_{1})\cdots\phi_{\epsilon}(t_{k+1})d\langle W\rangle_{t_{1}}\cdots d\langle W\rangle_{t_{k+1}}\Big]\\
 & =\mathbb{E}_{\lambda}\Big[(k+1)!\int_{0<t_{1}<\cdots<t_{k}<\infty}\phi_{\epsilon}(t_{1})\cdots\phi_{\epsilon}(t_{k})\mathbb{E}_{\lambda}\Big(\int_{t_{k}}^{\infty}\phi_{\epsilon}(t_{k+1})d\langle W\rangle_{t_{k+1}}\Big|\,\mathcal{F}_{t_{k}}^{\lambda}\Big)d\langle W\rangle_{t_{1}}\cdots d\langle W\rangle_{t_{k}}\Big].
\end{aligned}
\label{eq:-3}
\end{equation}
Recall that $\phi_{\epsilon}(t,\omega)\le1_{I_{\epsilon}}(t)$. By
\citep[Lemma 4.17]{LQ16}, we have 
\begin{equation}
\begin{aligned}\mathbb{E}_{\lambda}\Big(\int_{t_{k}}^{\infty}\phi_{\epsilon}(t_{k+1})d\langle W\rangle_{t_{k+1}}\Big|\,\mathcal{F}_{t_{k}}^{\lambda}\Big) & \le\mathbb{E}_{\lambda}\Big(\int_{t_{k}}^{\infty}1_{I_{\epsilon}}(t_{k+1})d\langle W\rangle_{t_{k+1}}\Big|\,\mathcal{F}_{t_{k}}^{\lambda}\Big)\\
 & =\int_{t_{k}}^{\infty}1_{I_{\epsilon}}(t_{k+1})(P_{t_{k+1}-t_{k}}\mu)(X_{t_{k}})dt_{k+1},
\end{aligned}
\label{eq:-2}
\end{equation}
where, for any Borel measure $\lambda$ on $\mathbb{S}$,
\[
P_{t}\lambda(x)\triangleq\int_{\mathbb{S}}p_{t}(x,y)\,\lambda(dy),\;\;x\in\mathbb{S},
\]
with $p_{t}(x,y)$ being the transition kernel of $\{X_{t}\}_{t\ge0}$,
which is jointly continuous on $\mathbb{S}\times\mathbb{S}$. By \citep[Theorem 5.3.1]{Ki01},
there exists a universal constant $C_{\ast}>0$ such that
\begin{equation}
C_{\ast}^{-1}\max\{1,t^{-d_{s}/2}\}\le p_{t}(x,y)\le C_{\ast}\max\{1,t^{-d_{s}/2}\},\;\;t\in(0,\infty),\,x,y\in\mathbb{S},\label{eq:-53}
\end{equation}
where $d_{s}=2\log3/\log5<2$ is the spectral dimension of $\{X_{t}\}_{t\ge0}$.
Therefore,
\[
\Vert P_{t}\mu\Vert_{L^{\infty}}\le C_{\ast}\max\{1,t^{-d_{s}/2}\},\quad t\in(0,\infty).
\]
For $I_{\epsilon}$ with $|I_{\epsilon}|\le1$, by (\ref{eq:-2}),
\[
\begin{aligned} & \;\quad\mathbb{E}_{\lambda}\Big(\int_{t_{k}}^{\infty}\phi_{\epsilon}(t_{k+1})d\langle W\rangle_{t_{k+1}}\Big|\,\mathcal{F}_{t_{k}}^{\lambda}\Big)\\
 & \le\int_{t_{k}}^{\infty}1_{I_{\epsilon}}(t_{k+1})(t_{k+1}-t_{k})^{-d_{s}/2}\,dt_{k+1}\\
 & \le\int_{t_{k}}^{t_{k}+|I_{\epsilon}|}(t_{k+1}-t_{k})^{-d_{s}/2}\,dt_{k+1}+|I_{\epsilon}|^{-d_{s}/2}\int_{0}^{\infty}1_{I_{\epsilon}}(t_{k+1})\,dt_{k+1}\\
 & =C_{\ast}|I_{\epsilon}|^{1-d_{s}/2}.
\end{aligned}
\]
Hence, by (\ref{eq:-3}), 
\[
\mathbb{E}_{\lambda}\Big[\Big(\int_{0}^{\infty}\phi_{\epsilon}(t)d\langle W\rangle_{t}\Big)^{k+1}\Big]\le C_{\ast}|I_{\epsilon}|^{1-d_{s}/2}\mathbb{E}_{\lambda}\Big[(k+1)!\int_{0<t_{1}<\cdots<t_{k}<\infty}\phi_{\epsilon}(t_{1})\cdots\phi_{\epsilon}(t_{k})d\langle W\rangle_{t_{1}}\cdots d\langle W\rangle_{t_{k}}\Big].
\]
By (\ref{eq:-39}) again, we conclude that 
\[
\mathbb{E}_{\lambda}\Big[\Big(\int_{0}^{\infty}\phi_{\epsilon}(t)d\langle W\rangle_{t}\Big)^{k+1}\Big]\le C_{\ast}(k+1)|I_{\epsilon}|^{1-d_{s}/2}\mathbb{E}_{\lambda}\Big[\Big(\int_{0}^{\infty}\phi_{\epsilon}(t)d\langle W\rangle_{t}\Big)^{k}\Big],
\]
which is (\ref{eq:-23}).

When $l$ is an integer, the asymptotic (\ref{eq:-4}) is a direct
corollary of (\ref{eq:-23}). For real-valued $l>k$, the conclusion
follows easily from interpolation 
\[
m_{k+\theta,\lambda}(I_{\epsilon};E)\le m_{k,\lambda}(I_{\epsilon};E)^{1-\theta}m_{k+1,\lambda}(I_{\epsilon};E)^{\theta},\;\;\theta\in(0,1).
\]
\end{proof}
\begin{rem}
\label{rem:-1}The order estimate (\ref{eq:-23}) implies that $m_{k,\lambda}(I_{\epsilon};E)=\mathrm{O}(|I_{\epsilon}|^{k(1-d_{s}/2)})$,
which is quite sharp. In fact, since the heat kernel estimate (\ref{eq:-53})
is two-sided, by \citep[Lemma 4.17]{LQ16}, we have that 
\[
\mathbb{E}_{x}\Big(\int_{0}^{\epsilon}d\langle W\rangle_{t}\Big)=\int_{0}^{\epsilon}P_{t}\mu(x)dt\ge C_{\ast}\epsilon^{1-d_{s}/2},\;\;\text{for all}\;x\in\mathbb{S}.
\]
Therefore, by (\ref{eq:-23}), 
\[
\mathbb{E}_{x}\Big[\Big(\int_{0}^{\epsilon}d\langle W\rangle_{t}\Big)^{k}\Big]^{1/k}\le\mathrm{O}(\epsilon^{1-d_{s}/2})\le\mathbb{E}_{x}\Big(\int_{0}^{\epsilon}d\langle W\rangle_{t}\Big).
\]
Notice that the reverse of the above inequality is a direct consequence
of Hölder's inequality. Therefore, we see that, up to a multiplicative
constant, 
\[
\mathbb{E}_{\lambda}\Big[\Big(\int_{0}^{\epsilon}d\langle W\rangle_{t}\Big)^{k}\Big]\sim\mathbb{E}_{\lambda}\Big[\Big(\int_{0}^{\epsilon}d\langle W\rangle_{t}\Big)\Big]^{k}\sim\epsilon^{k(1-d_{s}/2)},\;\;\text{as}\;\epsilon\to0.
\]
\end{rem}
We shall also need the following estimate for solutions of linear
SDEs driven by the Brownian martingale $W$.
\begin{lem}
\label{lem:}Let $\alpha_{1}\in L^{\infty}(\mathfrak{M}_{1}),\alpha_{2}\in L^{\infty}(\mathfrak{M}_{2})$
be progressively measurable processes, and $\beta\in L^{\infty}(\mathfrak{M}_{2})$
be a predictable process. Let $\{Y_{t}\}$ be the solution to the
SDE
\[
\left\{ \begin{aligned}dY_{t} & =(a_{1}(t)Y_{t}+\alpha_{1}(t))dt+(a_{2}(t)Y_{t}+\alpha_{2}(t))d\langle W\rangle_{t}\\
 & \quad+(b(t)Y_{t}+\beta(t))dW_{t},\qquad t\in[0,T],\\
Y_{0} & =\xi.
\end{aligned}
\right.
\]
Suppose that
\[
|\varphi(t)|\le M\quad\text{for }\varphi=a_{1},a_{2},b,
\]
where $M>0$ is a constant. Then, for each $\lambda\in\mathcal{P}(\mathbb{S})$
and each $k\in(1/2,\infty)$, 
\begin{equation}
\mathfrak{T}_{2k}(Y)\le C\,\mathbb{E}_{\lambda}\bigg[|\xi|^{2k}+\Big(\int_{0}^{T}|\alpha_{1}(t)|dt\Big)^{2k}+\Big(\int_{0}^{T}|\alpha_{2}(t)|d\langle W\rangle_{t}\Big)^{2k}+\Big(\int_{0}^{T}|\beta(t)|^{2}d\langle W\rangle_{t}\Big)^{k}\bigg].\label{eq:-44}
\end{equation}
where $C=C(k,M)>0$ is a constant depending only on $k,M$, 
\begin{equation}
\mathfrak{T}_{2k}(\varphi)=\mathbb{E}_{\lambda}\Big(\sup_{0\le t\le T}|\varphi(t)|^{2k}\mathrm{e}_{t}^{-1}+\int_{0}^{T}|\varphi(t)|^{2k}\mathrm{e}_{t}^{-1}d\langle W\rangle_{t}\Big).\label{eq:-46}
\end{equation}
for any $k\ge1$ and any progressively measurable process $\varphi(t)$,
and 
\begin{equation}
\mathrm{e}_{t}=\exp(\kappa\langle W\rangle_{t})\label{eq:-47}
\end{equation}
for a sufficiently large constant $\kappa>0$ depending only on $k,M$
(e.g. $\kappa=8k^{2}(M+1)^{2}$ will suffice). Therefore, 
\begin{equation}
\begin{aligned}\mathfrak{T}_{2k}(Y) & \le C\,\bigg\{\mathbb{E}_{\lambda}(|\xi|^{2k})+\Vert\alpha_{1}\Vert_{L^{\infty}(\mathfrak{M}_{1})}^{2k}\mathbb{E}_{\lambda}\Big[\Big(\int_{0}^{T}1\{\alpha_{1}(t)\not=0\}dt\Big)^{2k}\Big]\\
 & \quad+\Vert\alpha_{2}\Vert_{L^{\infty}(\mathfrak{M}_{2})}^{2k}\mathbb{E}_{\lambda}\Big[\Big(\int_{0}^{T}1\{\alpha_{2}(t)\not=0\}d\langle W\rangle_{t}\Big)^{2k}\Big]\\
 & \quad+\Vert\beta\Vert_{L^{\infty}(\mathfrak{M}_{2})}^{2k}\mathbb{E}_{\lambda}\Big[\Big(\int_{0}^{T}1\{\beta(t)\not=0\}d\langle W\rangle_{t}\Big)^{k}\Big]\bigg\},
\end{aligned}
\label{eq:-1}
\end{equation}
\end{lem}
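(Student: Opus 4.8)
The plan is to apply Itô's formula to the weighted process $Z_t=|Y_t|^{2k}\mathrm{e}_t^{-1}$ and to read off the two halves of $\mathfrak{T}_{2k}(Y)$ --- the running supremum and the $d\langle W\rangle$-integral --- from the resulting semimartingale decomposition. Since $|y|^{2k}$ fails to be $C^{2}$ at the origin when $k\in(1/2,1)$, I would first work with the smooth approximants $(y^{2}+\delta)^{k}$ and let $\delta\downarrow0$ at the end; and since the moments of $Y$ are not known a priori to be finite, I would localise along the stopping times $\tau_n=\inf\{t:|Y_t|\ge n\}\wedge T$, prove the estimate on $[0,\tau_n]$, and recover the claim by Fatou's lemma (on the left) and monotone convergence (on the right). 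Throughout, the elementary but essential observation is that $\mathrm{e}_t^{-1}=\exp(-\kappa\langle W\rangle_t)\le1$ and $d(\mathrm{e}_t^{-1})=-\kappa\,\mathrm{e}_t^{-1}d\langle W\rangle_t$, so the weight is of bounded variation and contributes no bracket term.

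Writing out Itô's formula, the martingale part is $N_t=\int_0^t 2k|Y_s|^{2k-2}Y_s(b(s)Y_s+\beta(s))\,\mathrm{e}_s^{-1}dW_s$, the $dt$-drift is $2k\,\mathrm{e}_s^{-1}(a_1|Y_s|^{2k}+\alpha_1|Y_s|^{2k-2}Y_s)$, and the $d\langle W\rangle$-drift, after including the $-\kappa|Y_s|^{2k}$ contribution from the weight and the second-order term $k(2k-1)|Y_s|^{2k-2}(bY_s+\beta)^{2}$, has $|Y_s|^{2k}$-coefficient $2k a_2+k(2k-1)b^{2}-\kappa$. Here lies the key idea, and the only genuinely fractal feature: on $\mathbb{R}$ there is no separate $d\langle W\rangle$-drift, but here the exponential weight plays the role of an integrating factor for it. Using $|a_2|,|b|\le M$ and choosing $\kappa=8k^{2}(M+1)^{2}$ --- large enough to also swallow the $|Y_s|^{2k}$ pieces produced below by Young's inequality --- this coefficient is $\le-\kappa/2$, so the $d\langle W\rangle$-drift yields a good negative term $-\tfrac{\kappa}{2}\int_0^t Z_s\,d\langle W\rangle_s$ that furnishes the integral half of $\mathfrak{T}_{2k}(Y)$.

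The remaining, genuinely inhomogeneous terms are split by Young's inequality so as to produce exactly the sources in (\ref{eq:-44}). For the $dt$-source I would bound $2k|\alpha_1||Y_s|^{2k-1}\mathrm{e}_s^{-1}\le 2k|\alpha_1|(\sup_s Z_s)^{(2k-1)/2k}$ and apply Young with exponents $2k/(2k-1)$ and $2k$ to extract $(\int_0^T|\alpha_1|dt)^{2k}$; the same device on the $\alpha_2$ cross term gives $(\int_0^T|\alpha_2|d\langle W\rangle_t)^{2k}$, while the linear term $2k a_1 Z_s$ is left as a $\int_0^t\mathbb{E}\sup_{r\le s}Z_r\,ds$ feedback for Gronwall. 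The $\beta$-terms need more care: writing $|\beta||Y|^{2k-1}\le\tfrac12\beta^{2}|Y|^{2k-2}+\tfrac12|Y|^{2k}$ reduces the $b\beta$ cross term to an absorbable $|Y|^{2k}$ piece plus $\beta^{2}|Y|^{2k-2}$, and for $\beta^{2}|Y_s|^{2k-2}\mathrm{e}_s^{-1}\le\beta^{2}(\sup_s Z_s)^{(k-1)/k}$ Young with exponents $k/(k-1)$ and $k$ produces precisely $(\int_0^T\beta^{2}d\langle W\rangle_t)^{k}$. To control the supremum half I would apply the Burkholder--Davis--Gundy inequality to $N$: since $\langle N\rangle_T\le C\int_0^T(|Y_s|^{4k}+|Y_s|^{4k-2}\beta^{2})\mathrm{e}_s^{-2}d\langle W\rangle_s$ and $\mathrm{e}_s^{-1}\le1$, the first part is dominated by $(\sup_s Z_s)\int_0^T Z_s d\langle W\rangle_s$ and the second by $(\sup_s Z_s)^{(2k-1)/k}\int_0^T\beta^{2}d\langle W\rangle_s$; taking square roots and applying Young once more both feeds a small multiple of $\mathbb{E}\sup_s Z_s$ back for absorption and again reproduces $(\int_0^T\beta^{2}d\langle W\rangle_t)^{k}$.

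Collecting everything, I would obtain one inequality for $\mathbb{E}\sup_{s\le t}Z_s$ and one for $\mathbb{E}\int_0^t Z_s d\langle W\rangle_s$ in which all $\alpha$- and $\beta^{2}$-terms already have the desired form, the surviving $\mathbb{E}\sup Z$ and $\mathbb{E}\int Z d\langle W\rangle$ terms carry arbitrarily small constants (via the free parameters in Young), and the only positive feedback is the $dt$-integral $\int_0^t\mathbb{E}\sup_{r\le s}Z_r\,ds$. Absorbing the small terms and invoking Gronwall's lemma closes the estimate and yields (\ref{eq:-44}); letting $n\to\infty$ and $\delta\downarrow0$ removes the localisation and regularisation. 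Finally, (\ref{eq:-1}) follows at once from (\ref{eq:-44}) by the pointwise bounds $|\alpha_i|\le\Vert\alpha_i\Vert_{L^{\infty}(\mathfrak{M}_i)}1\{\alpha_i\neq0\}$ and $|\beta|^{2}\le\Vert\beta\Vert_{L^{\infty}(\mathfrak{M}_2)}^{2}1\{\beta\neq0\}$, valid $\mathfrak{M}_1$- resp. $\mathfrak{M}_2$-a.e., after which the constants are pulled out of the respective integrals. I expect the main obstacle to be the bookkeeping that keeps the $dt$- and $d\langle W\rangle$-scales separate: the weight $\mathrm{e}_t^{-1}$ must be calibrated to neutralise the $d\langle W\rangle$-drift (so that no uncontrollable Gronwall-in-$\langle W\rangle$ is needed), the ordinary $dt$-drift is left for a classical Gronwall argument, and the quadratic $\beta$-contribution must be routed through BDG to emerge in the form $(\int_0^T\beta^{2}d\langle W\rangle_t)^{k}$ rather than as $\int_0^T|\beta|^{2k}d\langle W\rangle_t$.
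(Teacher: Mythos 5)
Your proposal follows essentially the same route as the paper: It\^o's formula applied to $|Y_t|^{2k}\mathrm{e}_t^{-1}$ with the exponential weight in $\langle W\rangle_t$ acting as the integrating factor that neutralises the $d\langle W\rangle$-drift, Young's inequality to extract the $\alpha_1,\alpha_2,\beta$ source terms in the stated form, and Burkholder--Davis--Gundy plus absorption for the running supremum; the localisation you propose is precisely the alternative the paper itself invokes to justify $\mathbb{E}(Z^{2k})<\infty$. The only cosmetic difference is that you dispose of the $dt$-drift via Gronwall, whereas the paper absorbs it directly into the weight.
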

\begin{rem}
\label{rem:-3}From now on, for the ease of notation, we shall use
the same notation $\mathrm{e}_{t}$ to denote $\exp(\kappa\langle W\rangle_{t})$
with possibly different constants $\kappa$ depending only on $k$
and the $L^{\infty}$ norms of coefficients of SDEs. 
\end{rem}
\begin{proof}
To simplify notation, we shall denote $Y^{m}=|Y|^{m}\,\mathrm{sgn}(Y)$
for any $m>0$. By Itô's formula, 
\[
\begin{aligned} & \;\quad|Y_{t}|^{2k}\mathrm{e}_{t}^{-1}\\
 & =|\xi|^{2k}+\int_{0}^{t}\big[2kY_{r}^{2k-1}(a_{1}(r)Y_{r}+\alpha_{1}(r))-k_{1}|Y_{r}|^{2k}\big]\mathrm{e}_{r}^{-1}dr\\
 & \quad+\int_{0}^{t}\big[2kY_{r}^{2k-1}(a_{2}(r)Y_{r}+\alpha_{2}(r))+k(2k-1)Y_{r}^{2k-2}(b(r)Y_{r}+\beta(r))^{2}-k_{2}|Y_{r}|^{2k}\big]\mathrm{e}_{r}^{-1}d\langle W\rangle_{r}\\
 & \quad+\int_{0}^{t}2kY_{r}^{2k-1}(b(r)Y_{r}+\beta(r))\mathrm{e}_{r}^{-1}dW_{r}\\
 & \le|\xi|^{2k}+\int_{0}^{t}2k|Y_{r}|^{2k-1}|\alpha_{1}(r)|\mathrm{e}_{r}^{-1}dr\\
 & \quad+\int_{0}^{t}\big[(2kM+4k^{2}M^{2}-k_{2})|Y_{r}|^{2k}+2k|Y_{r}|^{2k-1}|\alpha_{2}(r)|+4k^{2}|Y_{r}|^{2k-2}|\beta(r)|^{2}\big]\mathrm{e}_{r}^{-1}d\langle W\rangle_{r}\\
 & \quad+\bigg|\int_{0}^{t}2kY_{r}^{2k-1}(b(r)Y_{r}+\beta(r))\mathrm{e}_{r}^{-1}dW_{r}\bigg|.
\end{aligned}
\]
Denote $Z=\sup_{0\le t\le T}|Y_{t}|\mathrm{e}_{t}^{-1/(2k)}$. Then
\begin{equation}
\begin{aligned}Z^{2k} & \le|\xi|^{2k}+2kZ^{2k-1}\bigg(\int_{0}^{T}|\alpha_{1}(t)|dt+\int_{0}^{T}|\alpha_{2}(t)|d\langle W\rangle_{t}\bigg)\\
 & \quad+4k^{2}Z^{2k-2}\int_{0}^{T}|\beta(t)|^{2}d\langle W\rangle_{t}+(2kM+4k^{2}M^{2}-k_{2})\int_{0}^{T}|Y_{r}|^{2k}\mathrm{e}_{r}^{-1}d\langle W\rangle_{r}\\
 & \quad+\sup_{0\le t\le T}\bigg|\int_{0}^{t}2kY_{r}^{2k-1}(b(r)Y_{r}+\beta(r))\mathrm{e}_{r}^{-1}dW_{r}\bigg|.
\end{aligned}
\label{eq:}
\end{equation}
By the Burkholder\textendash Davis\textendash Gundy inequality,
\[
\begin{aligned} & \;\quad\mathbb{E}_{\lambda}\bigg(\sup_{0\le t\le T}\bigg|\int_{0}^{t}Y_{r}^{2k-1}(b(r)Y_{r}+\beta(r))\mathrm{e}_{r}^{-1}dW_{r}\bigg|\bigg)\\
 & \le C_{\ast}\,\mathbb{E}_{\lambda}\bigg[\bigg(\int_{0}^{T}\big(M^{2}|Y_{r}|^{4k}+|Y_{r}|^{4k-2}|\beta(r)|^{2}\big)\mathrm{e}_{r}^{-2}d\langle W\rangle_{r}\bigg)^{1/2}\bigg]\\
 & \le C_{\ast}\,\mathbb{E}_{\lambda}\bigg[MZ^{k}\bigg(\int_{0}^{T}|Y_{r}|^{2k}\mathrm{e}_{r}^{-1}d\langle W\rangle_{r}\bigg)^{1/2}+Z^{2k-1}\bigg(\int_{0}^{T}|\beta(r)|^{2}d\langle W\rangle_{r}\bigg)^{1/2}\bigg]\\
 & \le C_{\ast}\,\mathbb{E}_{\lambda}\bigg[(\epsilon_{1}+\epsilon_{2})Z^{2k}+\frac{M}{\epsilon_{1}}\int_{0}^{T}|Y_{r}|^{2k}\mathrm{e}_{r}^{-1}d\langle W\rangle_{r}+\frac{1}{2k\epsilon_{2}^{2k-1}}\bigg(\int_{0}^{T}|\beta(r)|^{2}d\langle W\rangle_{r}\bigg)^{k}\bigg].
\end{aligned}
\]
where $C_{\ast}>0$ is a universal constant. Choosing $\epsilon_{1}=1/4$
and $\epsilon_{2}>0$ sufficiently small gives
\[
\begin{aligned} & \;\quad\mathbb{E}_{\lambda}\bigg(\sup_{0\le t\le T}\bigg|\int_{0}^{t}Y_{r}^{2k-1}(b(r)Y_{r}+\beta(r))\mathrm{e}_{r}^{-1}dW_{r}\bigg|\bigg)\\
 & \le\frac{1}{2}\mathbb{E}_{\lambda}\big(Z^{2k}\big)+\mathbb{E}_{\lambda}\bigg[4M\int_{0}^{T}|Y_{r}|^{2k}\mathrm{e}_{r}^{-1}d\langle W\rangle_{r}+C\,\bigg(\int_{0}^{T}|\beta(r)|^{2}d\langle W\rangle_{r}\bigg)^{k}\bigg],
\end{aligned}
\]
where $C>0$ denotes a constant depending only on $k,M$. Since $\kappa>4M+2kM+4k^{2}M^{2}$,
(\ref{eq:-1}) follows easily from the above and (\ref{eq:}) and
Young's inequality. Notice that, in the above inequality, we have
used the fact that $\mathbb{E}(Z^{2k})<\infty$ (or alternatively
an localization argument together with $|Z|<\infty$ a.s.), which
can be shown by an iteration argument similar to the proof of \citep[Theorem 3.10]{LQ16}. 
\end{proof}
We now turn to the derivation of the stochastic maximum principle.
Suppose that $\bar{u}\in\mathcal{A}[0,T]$ is a minimizer of (\ref{eq:-8}),
and $\bar{x}(\cdot)$ is the corresponding controlled process. Let
$\{I_{\epsilon}\}_{\epsilon>0}$ be an arbitrary family of $\mathcal{B}_{\lambda}([0,\infty))$-measurable
subsets of $[0,T]$ such that $\lim_{\epsilon\to0}|I_{\epsilon}|=0$.

Let $S_{1},S_{2}\subseteq[0,\infty)\times\Omega$ be disjoint optional
sets such that $\mathfrak{M}_{1}$ is supported on $S_{1}$ and $\mathfrak{M}_{2}$
on $S_{2}$. An example of such $(S_{1},S_{2})$ is $S_{1}=1\{(t,\omega):L_{1}(t,\omega)=1\}$,
$S_{2}=1\{(t,\omega):L_{2}(t,\omega)=1\}$, where 
\[
L_{1}=\frac{d\mathfrak{M}_{1}}{d(\mathfrak{M}_{1}+\mathfrak{M}_{2})},\;L_{2}(t)=\frac{d\mathfrak{M}_{2}}{d(\mathfrak{M}_{1}+\mathfrak{M}_{2})}
\]
are the Radon\textendash Nikodym derivatives with respect to the optional
$\sigma$-field on $[0,T]\times\Omega$. For arbitrary $u_{1},u_{2}\in\mathcal{A}[0,T]$,
let
\[
u^{\epsilon}(t,\omega)=\left\{ \begin{aligned}\bar{u}(t,\omega), & \;\;\text{if }(t,\omega)\in([0,T]\backslash I_{\epsilon})\times\Omega,\\
u_{1}(t,\omega), & \;\;\text{if }(t,\omega)\in(I_{\epsilon}\times\Omega)\cap S_{1},\\
u_{2}(t,\omega), & \;\;\text{if }(t,\omega)\in(I_{\epsilon}\times\Omega)\cap S_{2}.
\end{aligned}
\right.
\]
Let 
\begin{equation}
E=\{(t,\omega)\in S_{1}:\bar{u}(t,\omega)\not=u_{1}(t,\omega)\}\cup\{(t,\omega)\in S_{2}:\bar{u}(t,\omega)\not=u_{2}(t,\omega)\}.\label{eq:-40}
\end{equation}
Then $E$ is progressively measurable. Notice that if $\mathfrak{M}_{2}(E)=0$,
then $m_{k,\lambda}([0,\infty);E)=0$ for all $k\in\mathbb{N}_{+}$.

We denote by $x^{\epsilon}(\cdot)$ the controlled process corresponding
to $u^{\epsilon}(\cdot)$, and let
\begin{equation}
\xi^{\epsilon}=x^{\epsilon}-\bar{x}.\label{eq:-24}
\end{equation}
Define the first-order approximating process $y^{\epsilon}(\cdot)$
by
\begin{equation}
\left\{ \begin{aligned}dy^{\epsilon}(t) & =\partial_{x}b_{1}(t)y^{\epsilon}(t)dt+\partial_{x}b_{2}(t)y^{\epsilon}(t)d\langle W\rangle_{t}+\big(\delta\sigma(t)+\partial_{x}\sigma(t)y^{\epsilon}(t)\big)dW_{t},\\
y^{\epsilon}(0) & =0,
\end{aligned}
\right.\label{eq:-9}
\end{equation}
and the second-order approximating process $z^{\epsilon}(\cdot)$
by
\begin{equation}
\left\{ \begin{aligned}dz^{\epsilon}(t) & =\Big[\partial_{x}b_{1}(t)z^{\epsilon}(t)+\delta b_{1}(t)+\frac{1}{2}\partial_{x}^{2}b_{1}(t)y^{\epsilon}(t)^{2}\Big]dt\\
 & \quad+\Big[\partial_{x}b_{2}(t)z^{\epsilon}(t)+\delta b_{2}(t)+\frac{1}{2}\partial_{x}^{2}b_{2}(t)y^{\epsilon}(t)^{2}\Big]d\langle W\rangle_{t}\\
 & \quad+\Big[\partial_{x}\sigma(t)z^{\epsilon}(t)+\delta(\partial_{x}\sigma)(t)y^{\epsilon}(t)+\frac{1}{2}\partial_{x}^{2}\sigma(t)y^{\epsilon}(t)^{2}\Big]dW_{t},\\
z^{\epsilon}(0) & =0,
\end{aligned}
\right.\label{eq:-10}
\end{equation}
where, for any function $\varphi:[0,\infty)\times\mathbb{R}\times\mathbb{U}\to\mathbb{R}$,
we denote
\[
\varphi(t)=\varphi(t,\bar{x}(t),\bar{u}(t)),\;\delta\varphi(t)=\varphi(t,\bar{x}(t),u^{\epsilon}(t))-\varphi(t,\bar{x}(t),\bar{u}(t)),\;\;t\ge0.
\]
Clearly, $\mathrm{supp}(\delta\varphi)\subseteq I_{\epsilon}$. We
shall need the following estimates.
\begin{lem}
\label{lem:-2}Let $E$ be the progressively measurable set defined
by (\ref{eq:-40}). Then, for each $k\ge1$, as $\epsilon\to0$,
\begin{align}
\mathfrak{T}_{2k}(\xi^{\epsilon}) & =\mathfrak{M}_{1}(E)\mathrm{O}\big(|I_{\epsilon}|^{k})+\mathrm{O}\big(m_{k,\lambda}(I_{\epsilon};E)\big),\label{eq:-13}\\
\mathfrak{T}_{2k}(y^{\epsilon}) & =\mathfrak{M}_{1}(E)\mathrm{O}\big(|I_{\epsilon}|^{k})+\mathrm{O}\big(m_{k,\lambda}(I_{\epsilon};E)\big),\label{eq:-14}\\
\mathfrak{T}_{2k}(z^{\epsilon}) & =\mathfrak{M}_{1}(E)\mathrm{O}\big(|I_{\epsilon}|^{2k})+\mathrm{O}\big(m_{2k,\lambda}(I_{\epsilon};E)\big),\label{eq:-15}\\
\mathfrak{T}_{2k}(\xi^{\epsilon}(t) & -y^{\epsilon}(t))=\mathfrak{M}_{1}(E)\mathrm{O}\big(|I_{\epsilon}|^{2k})+\mathrm{O}\big(m_{2k,\lambda}(I_{\epsilon};E)\big),\label{eq:-11}\\
\mathfrak{T}_{2k}(\xi^{\epsilon}(t) & -y^{\epsilon}(t)-z^{\epsilon}(t))=\mathfrak{M}_{1}(E)\,\mathrm{o}\big(|I_{\epsilon}|^{2k})+\mathrm{o}\big(m_{2k,\lambda}(I_{\epsilon};E)\big).\label{eq:-12}
\end{align}
\end{lem}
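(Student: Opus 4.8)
\noindent
The plan is to observe that each of the five processes solves a linear SDE of precisely the type treated in Lemma~\ref{lem:}, and then to read off the orders from the (weighted) estimate proved there. Linearising the coefficients about $\bar x$ --- writing, for $\varphi=b_1,b_2,\sigma$,
\[
\varphi(t,x^\epsilon,u^\epsilon)-\varphi(t,\bar x,\bar u)=a_\varphi(t)\,\xi^\epsilon(t)+\delta\varphi(t)+r_\varphi(t),
\]
with $a_\varphi(t)=\int_0^1\partial_x\varphi(t,\bar x+\theta\xi^\epsilon,u^\epsilon)\,d\theta$ bounded by $M$ (assumptions (A.1)--(A.2), cf. Remark~\ref{rem:-2}) and $r_\varphi$ a quadratic remainder --- and subtracting the defining equations (\ref{eq:-9}), (\ref{eq:-10}) of $y^\epsilon$ and $z^\epsilon$, I obtain for each of $\xi^\epsilon$, $\xi^\epsilon-y^\epsilon$, $\xi^\epsilon-y^\epsilon-z^\epsilon$ (and directly for $y^\epsilon$, $z^\epsilon$) an SDE with bounded homogeneous coefficients $a_1,a_2,b$ and explicit inhomogeneities $\alpha_1,\alpha_2,\beta$. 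These are of two kinds: control-perturbation spikes $\delta\varphi$, which vanish off $(I_\epsilon\times\Omega)\cap E$; and nonlinear terms built from the lower-order processes. I would establish the bounds in the order (\ref{eq:-13}),(\ref{eq:-14}), then (\ref{eq:-15}),(\ref{eq:-11}), then (\ref{eq:-12}), so that each uses its predecessors.

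\noindent
The spike contributions are straightforward. Because $\{\delta b_1\neq0\}\subseteq(I_\epsilon\times\Omega)\cap E$, the drift spike enters Lemma~\ref{lem:} through $\mathbb{E}_\lambda[(\int_0^T 1\{\delta b_1\neq0\}\,dt)^{2k}]\le|I_\epsilon|^{2k}$, which is $\mathrm{O}(|I_\epsilon|^k)$ and vanishes when $\mathfrak{M}_1(E)=0$ (then $\int_{I_\epsilon}1_E\,dt=0$ $\mathbb{P}_\lambda$-a.s.); this is the term $\mathfrak{M}_1(E)\mathrm{O}(|I_\epsilon|^{k})$ of (\ref{eq:-13}) (resp. $\mathfrak{M}_1(E)\mathrm{O}(|I_\epsilon|^{2k})$ at second order). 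The $\langle W\rangle$- and $W$-spikes $\delta b_2,\delta\sigma$ enter through $\mathbb{E}_\lambda[(\int_{I_\epsilon}1_E\,d\langle W\rangle)^{2k}]=m_{2k,\lambda}(I_\epsilon;E)$ and $\mathbb{E}_\lambda[(\int_{I_\epsilon}1_E\,d\langle W\rangle)^{k}]=m_{k,\lambda}(I_\epsilon;E)$; by Lemma~\ref{lem:-1} the former is $\mathrm{o}(m_{k,\lambda})$. Since (\ref{eq:-9}) carries no drift spike this already gives (\ref{eq:-14}), and collecting all three spikes gives (\ref{eq:-13}).

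\noindent
Controlling the nonlinear inhomogeneities at the sharp order is the heart of the matter and the step I expect to be hardest. For $z^\epsilon$ the difficult term is $\tfrac12\partial_x^2\sigma\,(y^\epsilon)^2$ inside $\beta$, which must be shown to be $\mathrm{O}(m_{2k,\lambda})$. The naive estimate of the unweighted $\mathbb{E}_\lambda[(\int_0^T|y^\epsilon|^4\,d\langle W\rangle)^k]$ by extracting $\mathrm{e}_T$ with Cauchy--Schwarz is too lossy: it yields $m_{4k,\lambda}^{1/2}$, which is \emph{not} $\mathrm{O}(m_{2k,\lambda})$ in general, since the ratio blows up when $E$ has small local $\mathfrak{M}_2$-density. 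The remedy is to keep the weight attached to $y^\epsilon$. I would use the refinement of Lemma~\ref{lem:} that its own proof in fact establishes, in which the source integrals carry weights $\mathrm{e}_t^{-1/(2k)}$ (on $\alpha_1,\alpha_2$) and $\mathrm{e}_t^{-1/k}$ (on $\beta$) before these are discarded for the clean statement. Jensen's inequality then bounds $(\int_0^T|y^\epsilon|^4\mathrm{e}_t^{-1/k}\,d\langle W\rangle)^k$ by $\langle W\rangle_T^{k-1}\int_0^T|y^\epsilon|^{4k}\mathrm{e}_t^{-1}\,d\langle W\rangle$, the prefactor being merely \emph{polynomial} in $\langle W\rangle_T$ --- precisely because the weight on $\beta$ is $\mathrm{e}_t^{-1/k}$ rather than $\mathrm{e}_t^{-1}$, so no full exponential $\mathrm{e}_T$ is generated. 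Splitting $\langle W\rangle_T^{k-1}\le C(\langle W\rangle_t^{k-1}+(\langle W\rangle_T-\langle W\rangle_t)^{k-1})$, the past part is absorbed into a slightly smaller weight (since $s^{k-1}e^{-\kappa s}$ is bounded, Remark~\ref{rem:-3}), and the future increment is stripped by the Markov property using the uniformly bounded conditional moments $\mathbb{E}_\lambda[(\langle W\rangle_T-\langle W\rangle_t)^{k-1}\mid\mathcal{F}_t]\le C$ supplied by Lemma~\ref{lem:-3}. Everything then lands on $\mathfrak{T}_{4k}(y^\epsilon)$, which by (\ref{eq:-14}) at exponent $2k$ is $\mathrm{O}(m_{2k,\lambda})$ (the drift-spike term being absent in (\ref{eq:-9})); no moment doubling occurs. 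The $dt$-term $\tfrac12\partial_x^2 b_1(y^\epsilon)^2$ reduces even more easily to the supremum part of $\mathfrak{T}_{4k}(y^\epsilon)$, and the mixed spike term $\delta(\partial_x\sigma)y^\epsilon$ combines the spike support with $\mathfrak{T}_{2k}(y^\epsilon)$; together these give (\ref{eq:-15}).

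\noindent
The two remainder bounds follow from the same machinery. The equation for $\xi^\epsilon-y^\epsilon$ has inhomogeneities $\delta b_1$ (a drift spike, producing the $\mathfrak{M}_1(E)\mathrm{O}(|I_\epsilon|^{2k})$ term), the products $\delta(\partial_x\varphi)\,\xi^\epsilon$ of a spike with $\xi^\epsilon$, and the quadratic remainders $\mathrm{O}(|\xi^\epsilon|^2)$; each is dominated using the weighted bound (\ref{eq:-13}) for $\xi^\epsilon$ and the weight-stripping above, the cross-order products being pushed below $m_{2k,\lambda}$ by Lemma~\ref{lem:-1}, which yields (\ref{eq:-11}). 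For (\ref{eq:-12}) the sources are the genuine second-order Taylor remainders, namely the third-order contribution $\mathrm{O}(|\xi^\epsilon|^3)$ (controllable because (A.2) makes only $\partial_x^2\varphi$ Lipschitz) and the discrepancy $\tfrac12\partial_x^2\varphi\,[(\xi^\epsilon)^2-(y^\epsilon)^2]=\tfrac12\partial_x^2\varphi\,(\xi^\epsilon-y^\epsilon)(\xi^\epsilon+y^\epsilon)$; using (\ref{eq:-11}) to see that $\xi^\epsilon-y^\epsilon$ is one $m$-order smaller than $y^\epsilon$, both are of order $m_{3k,\lambda}=\mathrm{o}(m_{2k,\lambda})$ by Lemma~\ref{lem:-1}, which upgrades the bound to the little-$\mathrm{o}$ of (\ref{eq:-12}). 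Assembling the five displays completes the proof.
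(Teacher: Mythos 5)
Your proposal is correct and follows the same architecture as the paper's proof: each of the five processes is exhibited as the solution of a linear SDE with bounded homogeneous coefficients, Lemma~\ref{lem:} reduces everything to moment bounds on the inhomogeneities, the spike terms are read off from $\{\delta\varphi\neq0\}\subseteq E\cap(I_\epsilon\times\Omega)$, and the estimates are bootstrapped in the order (\ref{eq:-13}), (\ref{eq:-14}), then (\ref{eq:-15}), (\ref{eq:-11}), then (\ref{eq:-12}), with Lemma~\ref{lem:-1} downgrading higher moments to little-$\mathrm{o}$'s. Where you genuinely diverge is in the pure-power source terms such as $\mathbb{E}_\lambda[(\int_0^T|\xi^\epsilon|^2\,d\langle W\rangle_t)^{2k}]$ and $\mathbb{E}_\lambda[(\int_0^T|y^\epsilon|^4\,d\langle W\rangle_t)^{k}]$. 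The paper extracts the exponential weight $\mathrm{e}_T$ by H\"older's inequality with auxiliary exponents $p,q>1$ (using Lemma~\ref{lem:-3} to integrate $\mathrm{e}_T$) and arrives at quantities like $C_p\,\mathfrak{T}_{4pk}(\xi^\epsilon)^{1/p}=\mathrm{O}\big(m_{2pk,\lambda}(I_\epsilon;E)^{1/p}\big)$, which it then declares to be $\mathrm{O}\big(m_{2k,\lambda}(I_\epsilon;E)\big)$; since $m_{2pk}^{1/p}\ge m_{2k}$ by Lyapunov's inequality on the probability space $(\Omega,\mathbb{P}_\lambda)$, that last reduction implicitly needs $m_{2k,\lambda}(I_\epsilon;E)\gtrsim|I_\epsilon|^{2k(1-d_s/2)}$, which Remark~\ref{rem:-1} supplies for $E$ of full measure but not for a general progressively measurable $E$ --- exactly the loss you flag. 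Your remedy (keeping the weights $\mathrm{e}_t^{-1/(2k)}$ and $\mathrm{e}_t^{-1/k}$ on the sources, which the proof of Lemma~\ref{lem:} indeed delivers via $|Y_r|^{2k-2}\mathrm{e}_r^{-(k-1)/k}\le Z^{2k-2}$; Jensen to produce only the polynomial factor $\langle W\rangle_T^{k-1}$; absorbing the past part into a smaller $\kappa$ and stripping the future increment by projection and the Markov property) closes the estimate at the sharp order using only $\mathfrak{T}_{4k}$ rather than $\mathfrak{T}_{4pk}$, so it avoids the $p$-th-root issue altogether. This costs more bookkeeping but is the more robust route. Two small requests: make the projection step explicit, since $(\langle W\rangle_T-\langle W\rangle_t)^{k-1}$ is not adapted and you need the optional projection theorem for integrals against the adapted increasing process $\langle W\rangle$; and record the $\delta b_2$ spike in the equation for $\xi^\epsilon-y^\epsilon$ and the sources $1_{E_\epsilon}\mathrm{O}(|\xi^\epsilon|^2)$, $1_{E_\epsilon}\mathrm{O}(|\xi^\epsilon-y^\epsilon|)$ in the equation for $\xi^\epsilon-y^\epsilon-z^\epsilon$, which you pass over silently but which do contribute (at the admissible orders $m_{2k,\lambda}$ and $\mathrm{o}(m_{2k,\lambda})$ respectively).
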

\begin{proof}
We only present the proof of (\ref{eq:-13}) and (\ref{eq:-11}),
since the proof of (\ref{eq:-14}) is similar to that of (\ref{eq:-13}),
while the proof of (\ref{eq:-15}) and (\ref{eq:-12}) are similar
to that of (\ref{eq:-11}). The difference between the proof of (\ref{eq:-13})
and (\ref{eq:-11}) is that the SDE for $\xi^{\epsilon}-y^{\epsilon}$
involves $\xi^{\epsilon}$ as bias terms $\alpha_{1},\alpha_{2},\beta$
in Lemma \ref{lem:} (see (\ref{eq:-50}) below), which requires further
estimate. This is also the case for $z^{\epsilon}$ and $\xi^{\epsilon}-y^{\epsilon}-z^{\epsilon}$,
and hence their estimates are similar to that of $\xi^{\epsilon}-y^{\epsilon}$. 

For any function $\varphi:[0,\infty)\times\mathbb{R}\times\mathbb{U}\to\mathbb{R}$,
denote
\[
\tilde{\varphi}(t)=\int_{0}^{1}\varphi(t,(1-\theta)\bar{x}(t)+\theta x^{\epsilon}(t),u^{\epsilon}(t))\,d\theta,\quad t\ge0.
\]
By (\ref{eq:-7}),
\[
\left\{ \begin{aligned}d\xi^{\epsilon}(t) & =\big[\widetilde{\partial_{x}b_{1}}(t)\xi^{\epsilon}(t)+\delta b_{1}(t)\big]dt+\big[\widetilde{\partial_{x}b_{2}}(t)\xi^{\epsilon}(t)+\delta b_{2}(t)\big]d\langle W\rangle_{t}\\
 & \quad+\big[\widetilde{\partial_{x}\sigma}(t)\xi^{\epsilon}(t)+\delta\sigma(t)\big]dW_{t},\qquad t\ge0,\\
\xi^{\epsilon}(0) & =0.
\end{aligned}
\right.
\]
Let $E_{\epsilon}=E\cap(I_{\epsilon}\times\Omega)$. Then $\mathrm{supp}(\delta\varphi)\subseteq E_{\epsilon},\;\varphi=b_{1},b_{2},\sigma$.
By Lemma \ref{lem:},
\[
\begin{aligned} & \;\quad\mathbb{E}_{\lambda}\Big(\sup_{0\le t\le T}|\xi^{\epsilon}(t)|^{2k}\mathrm{e}_{t}^{-1}\Big)\\
 & \le C\,\mathbb{E}_{\lambda}\bigg[\Big(\int_{0}^{T}|\delta b_{1}(t)|dt\Big)^{2k}+\Big(\int_{0}^{T}|\delta b_{2}(t)|d\langle W\rangle_{t}\Big)^{2k}+\Big(\int_{0}^{T}|\delta\sigma(t)|^{2}d\langle W\rangle_{t}\Big)^{k}\bigg]\\
 & =\mathfrak{M}_{1}(E)\mathrm{O}\big(|I_{\epsilon}|^{k})+\mathrm{O}\big(m_{k,\lambda}(I_{\epsilon};E)\big),
\end{aligned}
\]
where $C>0$ denotes a constant depending only on $k,M$, but might
be different at various appearances. We should point out that we explicitly
include the term $\mathfrak{M}_{1}(E)$ in the last equation to reflect
the fact that $\mathbb{E}_{\lambda}\big[\big(\int_{0}^{T}|\delta b_{1}(t)|dt\big)^{2k}\big]=0$
whenever $\mathfrak{M}_{1}(E)=0$. The appearances of $\mathfrak{M}_{1}(E)$
in other estimates are out of the same purpose. The above estimate
completes the proof of (\ref{eq:-13}). The proof of (\ref{eq:-14})
is similar. 

We now turn to the proof of (\ref{eq:-11}). By the definition of
$\tilde{\varphi}(t)$, we have 
\begin{equation}
\tilde{\varphi}(t)-\varphi(t)=\delta\varphi(t)+\mathrm{O}(|\xi^{\epsilon}|)=1_{E_{\epsilon}}(t)\mathrm{O}(1)+\mathrm{O}(|\xi^{\epsilon}|).\label{eq:-43}
\end{equation}
Let $\eta^{\epsilon}=\xi^{\epsilon}-y^{\epsilon}$, and 
\[
\chi_{1}(t)=1_{E_{\epsilon}}(t)\mathrm{O}(|\xi^{\epsilon}(t)|)+\mathrm{O}(|\xi^{\epsilon}(t)|^{2}).
\]
Then, by (\ref{eq:-43}) and the fact that $\delta\varphi=1_{E_{\epsilon}}$
for $\varphi=b_{1},b_{2},\sigma$, we have 
\begin{equation}
\begin{aligned}d\eta^{\epsilon} & =[\partial_{x}b_{1}(t)\eta^{\epsilon}+1_{E_{\epsilon}}(t)\mathrm{O}(1)+\chi_{1}(t)]\,dt\\
 & \quad+[\partial_{x}b_{2}(t)\eta^{\epsilon}+1_{E_{\epsilon}}(t)\mathrm{O}(1)+\chi_{1}(t)]\,d\langle W\rangle_{t}\\
 & \quad+[\partial_{x}\sigma(t)\eta^{\epsilon}+\chi_{1}(t)]\,dW_{t}.
\end{aligned}
\label{eq:-50}
\end{equation}
In order to apply Lemma \ref{lem:}, since the desired estimates involving
$1_{E_{\epsilon}}(t)\mathrm{O}(1)$ follow directly from definition,
we need to estimate $\mathbb{E}_{\lambda}\big[\big(\int_{0}^{T}\chi_{1}(t)\,dt\big)^{2k}\big]$,
$\mathbb{E}_{\lambda}\big[\big(\int_{0}^{T}\chi_{1}(t)\,d\langle W\rangle_{t}\big)^{2k}\big]$
and $\mathbb{E}_{\lambda}\big[\big(\int_{0}^{T}\chi_{1}(t)^{2}\,d\langle W\rangle_{t}\big)^{k}\big]$,
where 
\[
\chi(t)=1_{E_{\epsilon}}(t)\mathrm{O}(|\xi^{\epsilon}(t)|)+\mathrm{O}(|\xi^{\epsilon}(t)|^{2}).
\]

We first estimate $\mathbb{E}_{\lambda}\big[\big(\int_{0}^{T}\chi_{1}(t)\,dt\big)^{2k}\big]$.
Notice that, by Lemma \ref{lem:-3} and (\ref{eq:-13}), 
\begin{equation}
\begin{aligned}\mathbb{E}_{\lambda}\Big[\Big(\int_{0}^{T}1_{E}(t)|\xi^{\epsilon}(t)|\,dt\Big)^{2k}\Big] & \le\mathbb{E}_{\lambda}\Big[\mathfrak{M}_{1}(E)|I_{\epsilon}|^{2k}\mathrm{e}_{T}\Big(\sup_{t\in[0,T]}|\xi^{\epsilon}(t)|^{2k}\mathrm{e}_{t}^{-1}\Big)\Big]\\
 & \le C_{p}\mathfrak{M}_{1}(E)|I_{\epsilon}|^{2k}\mathbb{E}_{\lambda}\Big(\sup_{t\in[0,T]}|\xi^{\epsilon}(t)|^{2pk}\mathrm{e}_{t}^{-1}\Big)^{1/p}\\
 & \le\mathfrak{M}_{1}(E)\mathrm{o}(|I_{\epsilon}|^{2k}),
\end{aligned}
\label{eq:-51}
\end{equation}
Moreover, for any $p>1$, 
\[
\begin{aligned}\mathbb{E}_{\lambda}\Big[\Big(\int_{0}^{T}|\xi^{\epsilon}(t)|^{2}\,dt\Big)^{2k}\Big] & \le\mathbb{E}_{\lambda}\Big[\mathrm{e}_{T}\Big(\sup_{t\in[0,T]}|\xi^{\epsilon}(t)|^{4k}\mathrm{e}_{t}^{-1}\Big)\Big]\\
 & \le C_{p}\mathbb{E}_{\lambda}\Big(\sup_{t\in[0,T]}|\xi^{\epsilon}(t)|^{4pk}\mathrm{e}_{t}^{-1}\Big)^{1/p}\\
 & \le\mathrm{O}\big(m_{2pk,\lambda}(I_{\epsilon};E)^{1/p}\big),
\end{aligned}
\]
which implies that 
\[
\mathbb{E}_{\lambda}\Big[\Big(\int_{0}^{T}|\xi^{\epsilon}(t)|^{2}\,dt\Big)^{2k}\Big]\le\mathrm{O}\big(m_{2k,\lambda}(I_{\epsilon};E)\big).
\]
Therefore, 
\begin{equation}
\mathbb{E}_{\lambda}\Big[\Big(\int_{0}^{T}\chi_{1}(t)\,dt\Big)^{2k}\Big]\le\mathfrak{M}_{1}(E)\mathrm{O}(|I_{\epsilon}|^{2k})+\mathrm{O}\big(m_{2k,\lambda}(I_{\epsilon};E)\big).\label{eq:-45}
\end{equation}

Next, we estimate $\mathbb{E}_{\lambda}\big[\big(\int_{0}^{T}\chi_{1}(t)\,d\langle W\rangle_{t}\big)^{2k}\big]$.
For any $p,q>1$, by (\ref{eq:-13}), 
\[
\begin{aligned} & \;\quad\mathbb{E}_{\lambda}\Big[\Big(\int_{0}^{T}1_{E_{\epsilon}}(t)|\xi^{\epsilon}(t)|\,d\langle W\rangle_{t}\Big)^{2k}\Big]\\
 & \le\mathbb{E}_{\lambda}\Big[\mathrm{e}_{T}^{2k}\Big(\int_{0}^{T}1_{E_{\epsilon}}(t)|\xi^{\epsilon}(t)|\mathrm{e}_{t}^{-1}\,d\langle W\rangle_{t}\Big)^{2k}\Big]\\
 & \le C_{p}\mathbb{E}_{\lambda}\Big[\Big(\int_{0}^{T}1_{E_{\epsilon}}(t)|\xi^{\epsilon}(t)|\mathrm{e}_{t}^{-1}\,d\langle W\rangle_{t}\Big)^{2pk}\Big]^{1/p}\\
 & \le C_{p}\mathbb{E}_{\lambda}\Big[\Big(\int_{0}^{T}1_{E_{\epsilon}}(t)\,d\langle W\rangle_{t}\Big)^{2pk}\,\Big(\sup_{t\in[0,T]}|\xi^{\epsilon}(t)|\mathrm{e}_{t}^{-1}\Big)^{2pk}\Big]^{1/p}\\
 & \le C_{p}\mathbb{E}_{\lambda}\Big[\Big(\int_{0}^{T}1_{E_{\epsilon}}(t)\,d\langle W\rangle_{t}\Big)^{2pqk}\Big]^{1/(pq)}\mathbb{E}_{\lambda}\Big[\Big(\sup_{t\in[0,T]}|\xi^{\epsilon}(t)|\mathrm{e}_{t}^{-1}\Big)^{2pq^{\prime}k}\Big]^{1/(pq^{\prime})}\\
 & \le C_{p}m_{2pqk,\lambda}(I_{\epsilon};E)^{1/(pq)}\mathfrak{T}_{4pq^{\prime}k}(\xi^{\epsilon})^{1/(pq^{\prime})}
\end{aligned}
\]
which, in view of the fact that $\mathfrak{T}_{4pq^{\prime}k}(\xi^{\epsilon})=\mathrm{o}(1)$,
implies that 
\begin{equation}
\mathbb{E}_{\lambda}\Big[\Big(\int_{0}^{T}1_{E_{\epsilon}}(t)|\xi^{\epsilon}(t)|\,d\langle W\rangle_{t}\Big)^{2k}\Big]\le\mathrm{o}\big(m_{2k,\lambda}(I_{\epsilon};E)\big).\label{eq:-52}
\end{equation}
Moreover, by Lemma \ref{lem:-3} again, 
\[
\begin{aligned} & \;\quad\mathbb{E}_{\lambda}\Big[\Big(\int_{0}^{T}|\xi^{\epsilon}(t)|^{2}\,d\langle W\rangle_{t}\Big)^{2k}\Big]\\
 & \le\mathbb{E}_{\lambda}\Big[\mathrm{e}_{T}^{4k}\Big(\int_{0}^{T}|\xi^{\epsilon}(t)|\mathrm{e}^{-1}\,d\langle W\rangle_{t}\Big)^{2k}\Big(\sup_{t\in[0,T]}|\xi^{\epsilon}(t)|\mathrm{e}_{t}^{-1}\Big)^{2k}\Big]\\
 & \le C_{p}\mathbb{E}_{\lambda}\Big[\Big(\int_{0}^{T}|\xi^{\epsilon}(t)|\mathrm{e}^{-1}\,d\langle W\rangle_{t}\Big)^{2pk}\Big(\sup_{t\in[0,T]}|\xi^{\epsilon}(t)|\mathrm{e}_{t}^{-1}\Big)^{2pk}\Big]^{1/p}\\
 & \le C_{p}\mathfrak{T}_{4pk}(\xi^{\epsilon})^{1/p},
\end{aligned}
\]
which, by (\ref{eq:-13}), implies that 
\[
\mathbb{E}_{\lambda}\Big[\Big(\int_{0}^{T}|\xi^{\epsilon}(t)|^{2}\,d\langle W\rangle_{t}\Big)^{2k}\Big]\le\mathfrak{M}_{1}(E)\mathrm{O}\big(|I_{\epsilon}|^{2k})+\mathrm{O}\big(m_{2k,\lambda}(I_{\epsilon};E)\big).
\]
Therefore, 
\begin{equation}
\mathbb{E}_{\lambda}\Big[\Big(\int_{0}^{T}\chi_{1}(t)\,d\langle W\rangle_{t}\Big)^{2k}\Big]\le\mathfrak{M}_{1}(E)\mathrm{O}(|I_{\epsilon}|^{2k})+\mathrm{O}\big(m_{2k,\lambda}(I_{\epsilon};E)\big).\label{eq:-48}
\end{equation}

We now estimate $\mathbb{E}_{\lambda}\big[\big(\int_{0}^{T}\chi_{1}(t)^{2}\,d\langle W\rangle_{t}\big)^{k}\big]$.
Similarly to the above, for any $p>1$, 
\[
\begin{aligned} & \;\quad\mathbb{E}_{\lambda}\Big[\Big(\int_{0}^{T}1_{E_{\epsilon}}(t)|\xi^{\epsilon}(t)|^{2}\,d\langle W\rangle_{t}\Big)^{k}\Big]\\
 & \le\mathbb{E}_{\lambda}\Big[\mathrm{e}_{T}^{k}\Big(\int_{0}^{T}1_{E_{\epsilon}}(t)|\xi^{\epsilon}(t)|^{2}\mathrm{e}_{t}^{-1}\,d\langle W\rangle_{t}\Big)^{k}\Big]\\
 & \le C_{p}\mathbb{E}_{\lambda}\Big[\Big(\int_{0}^{T}1_{E_{\epsilon}}(t)|\xi^{\epsilon}(t)|^{2}\mathrm{e}_{t}^{-1}\,d\langle W\rangle_{t}\Big)^{pk}\Big]^{1/p}\\
 & \le C_{p}\mathbb{E}_{\lambda}\Big[\Big(\int_{0}^{T}1_{E_{\epsilon}}(t)\,d\langle W\rangle_{t}\Big)^{pk}\,\Big(\sup_{t\in[0,T]}|\xi^{\epsilon}(t)|^{2}\mathrm{e}_{t}^{-1}\Big)^{pk}\Big]^{1/p}\\
 & \le C_{p}\mathbb{E}_{\lambda}\Big[\Big(\int_{0}^{T}1_{E_{\epsilon}}(t)\,d\langle W\rangle_{t}\Big)^{2pk}+\Big(\sup_{t\in[0,T]}|\xi^{\epsilon}(t)|^{2}\mathrm{e}_{t}^{-1}\Big)^{2pk}\Big]^{1/p}\\
 & \le C_{p}[m_{2pk,\lambda}(I_{\epsilon};E)+\mathfrak{T}_{4pk}(\xi^{\epsilon})]^{1/p}\\
 & \le C_{p}\big[\mathfrak{M}_{1}(E)\mathrm{O}(|I_{\epsilon}|^{2k})+m_{2pk,\lambda}(I_{\epsilon};E)^{1/p}\big],
\end{aligned}
\]
which implies that 
\[
\mathbb{E}_{\lambda}\Big[\Big(\int_{0}^{T}1_{E_{\epsilon}}(t)\mathrm{O}(|\xi^{\epsilon}(t)|^{2})\,d\langle W\rangle_{t}\Big)^{k}\Big]\le\mathfrak{M}_{1}(E)\mathrm{O}(|I_{\epsilon}|^{2k})+\mathrm{O}\big(m_{2k,\lambda}(I_{\epsilon};E)\big).
\]
Moreover, for any $p>1$, by Young's inequality, 
\[
\begin{aligned} & \;\quad\mathbb{E}_{\lambda}\Big[\Big(\int_{0}^{T}|\xi^{\epsilon}(t)|^{4}\,d\langle W\rangle_{t}\Big)^{k}\Big]\\
 & \le\mathbb{E}_{\lambda}\Big[\mathrm{e}_{T}^{2k}\Big(\int_{0}^{T}|\xi^{\epsilon}(t)|^{4}\mathrm{e}_{t}^{-2}\,d\langle W\rangle_{t}\Big)^{k}\Big]\\
 & \le C_{p}\mathbb{E}_{\lambda}\Big[\Big(\int_{0}^{T}|\xi^{\epsilon}(t)|\mathrm{e}_{t}^{-1}\,d\langle W\rangle_{t}\Big)^{pk}\Big(\sup_{t\in[0,T]}|\xi^{\epsilon}(t)|^{3}\mathrm{e}_{t}^{-1}\Big)^{pk}\Big]^{1/p}\\
 & \le C_{p}\mathbb{E}_{\lambda}\Big[\Big(\int_{0}^{T}|\xi^{\epsilon}(t)|\mathrm{e}_{t}^{-1}\,d\langle W\rangle_{t}\Big)^{4pk}+\Big(\sup_{t\in[0,T]}|\xi^{\epsilon}(t)|^{3}\mathrm{e}_{t}^{-1}\Big)^{4pk/3}\Big]^{1/p}\\
 & \le C_{p}\mathfrak{T}_{4pk}(\xi^{\epsilon})^{1/p},
\end{aligned}
\]
which, together with (\ref{eq:-13}), implies that 
\[
\mathbb{E}_{\lambda}\Big[\Big(\int_{0}^{T}|\xi^{\epsilon}(t)|^{4}\,d\langle W\rangle_{t}\Big)^{k}\Big]\le\mathfrak{M}_{1}(E)\mathrm{O}(|I_{\epsilon}|^{2k})+\mathrm{O}\big(m_{2k,\lambda}(I_{\epsilon};E)\big).
\]
Hence, 
\begin{equation}
\mathbb{E}_{\lambda}\Big[\Big(\int_{0}^{T}\chi_{1}(t)^{2}\,d\langle W\rangle_{t}\Big)^{k}\Big]\le\mathfrak{M}_{1}(E)\mathrm{O}(|I_{\epsilon}|^{2k})+\mathrm{O}\big(m_{2k,\lambda}(I_{\epsilon};E)\big).\label{eq:-49}
\end{equation}

With the estimates (\ref{eq:-45})\textendash (\ref{eq:-49}), we
are now in a position to apply Lemma \ref{lem:} and deduce (\ref{eq:-11}).
The proof of (\ref{eq:-15}) is similar to that of (\ref{eq:-11}),
except that in the derivation, we need to use both (\ref{eq:-13}),
(\ref{eq:-14}), and (\ref{eq:-11}). 

The proof of (\ref{eq:-12}) is also similar in essence to that of
(\ref{eq:-11}). By a second order Taylor expansion, it is not difficult
to see that, for $\varphi=b_{1},b_{2},\sigma$, 
\[
\begin{aligned} & \;\quad\varphi(t,x^{\epsilon}(t),u^{\epsilon}(t))-\varphi(t)\\
 & =\partial_{x}\varphi(t)\xi^{\epsilon}+\frac{1}{2}\partial_{x}^{2}\varphi(t)(\xi^{\epsilon})^{2}+\delta\varphi(t)+\delta(\partial_{x}\varphi)(t)\xi^{\epsilon}+\delta(\partial_{x}^{2}\varphi)(t)(\xi^{\epsilon})^{2}+\mathrm{O}(|\xi^{\epsilon}|^{3})\\
 & =\partial_{x}\varphi(t)\xi^{\epsilon}+\frac{1}{2}\partial_{x}^{2}\varphi(t)(\xi^{\epsilon})^{2}+\delta\varphi(t)+\delta(\partial_{x}\varphi)(t)\xi^{\epsilon}+1_{E_{\epsilon}}(t)\mathrm{O}(|\xi^{\epsilon}|^{2})+\mathrm{O}(|\xi^{\epsilon}|^{3})\\
 & =\partial_{x}\varphi(t)\xi^{\epsilon}+\frac{1}{2}\partial_{x}^{2}\varphi(t)(y^{\epsilon})^{2}+\delta\varphi(t)+\delta(\partial_{x}\varphi)(t)y^{\epsilon}+1_{E_{\epsilon}}(t)\mathrm{O}(|\xi^{\epsilon}|^{2})+\mathrm{O}(|\xi^{\epsilon}|^{3})\\
 & \quad+1_{E_{\epsilon}}(t)\mathrm{O}(|\xi^{\epsilon}-y^{\epsilon}|)+\mathrm{O}(|\xi^{\epsilon}-y^{\epsilon}||y^{\epsilon}|)+\mathrm{O}(|\xi^{\epsilon}-y^{\epsilon}|^{2}).
\end{aligned}
\]
Therefore, 
\begin{equation}
\begin{aligned} & \;\quad\varphi(t,x^{\epsilon}(t),u^{\epsilon}(t))-\varphi(t)\\
 & =\partial_{x}\varphi(t)\xi^{\epsilon}+\frac{1}{2}\partial_{x}^{2}\varphi(t)(y^{\epsilon})^{2}+\delta\varphi(t)+1_{E_{\epsilon}}(t)\mathrm{O}(|\xi^{\epsilon}|)+\mathrm{O}(|\xi^{\epsilon}|^{3})\\
 & \quad+1_{E_{\epsilon}}(t)\mathrm{O}(|\xi^{\epsilon}-y^{\epsilon}|)+\mathrm{O}(|\xi^{\epsilon}-y^{\epsilon}||y^{\epsilon}|)+\mathrm{O}(|\xi^{\epsilon}-y^{\epsilon}|^{2}),
\end{aligned}
\label{eq:-233}
\end{equation}
for $\varphi=b_{1},b_{2}$, and 
\begin{equation}
\begin{aligned} & \;\quad\sigma(t,x^{\epsilon}(t),u^{\epsilon}(t))-\sigma(t)\\
 & =\partial_{x}\sigma(t)\xi^{\epsilon}+\frac{1}{2}\partial_{x}^{2}\sigma(t)(y^{\epsilon})^{2}+\delta\sigma(t)+\delta(\partial_{x}\sigma)(t)y^{\epsilon}+1_{E_{\epsilon}}(t)\mathrm{O}(|\xi^{\epsilon}|^{2})+\mathrm{O}(|\xi^{\epsilon}|^{3})\\
 & \quad+1_{E_{\epsilon}}(t)\mathrm{O}(|\xi^{\epsilon}-y^{\epsilon}|)+\mathrm{O}(|\xi^{\epsilon}-y^{\epsilon}||y^{\epsilon}|)+\mathrm{O}(|\xi^{\epsilon}-y^{\epsilon}|^{2}).
\end{aligned}
\label{eq:-234}
\end{equation}
Let $\zeta^{\epsilon}=\xi^{\epsilon}-y^{\epsilon}-z^{\epsilon}$,
and 
\[
\chi_{2}(t)=1_{E_{\epsilon}}(t)\mathrm{O}(|\xi^{\epsilon}|^{2})+\mathrm{O}(|\xi^{\epsilon}|^{3})+1_{E_{\epsilon}}(t)\mathrm{O}(|\xi^{\epsilon}-y^{\epsilon}|)+\mathrm{O}(|\xi^{\epsilon}-y^{\epsilon}||y^{\epsilon}|)+\mathrm{O}(|\xi^{\epsilon}-y^{\epsilon}|^{2}).
\]
Then, by substituting (\ref{eq:-233}) and (\ref{eq:-234}) into the
SDE of $\xi^{\epsilon}$, we have 
\[
\begin{aligned}d\zeta^{\epsilon} & =[\partial_{x}b_{1}(t)\zeta^{\epsilon}+1_{E_{\epsilon}}(t)\mathrm{O}(|\xi^{\epsilon}|)+\chi_{2}(t)]dt\\
 & \quad+[\partial_{x}b_{2}(t)\zeta^{\epsilon}+1_{E_{\epsilon}}(t)\mathrm{O}(|\xi^{\epsilon}|)+\chi_{2}(t)]d\langle W\rangle_{t}\\
 & \quad+[\partial_{x}b_{2}(t)\zeta^{\epsilon}+\chi_{2}(t)]dW_{t}.
\end{aligned}
\]
In view of (\ref{eq:-45}) and (\ref{eq:-52}), in order to apply
Lemma \ref{lem:}, it suffices to estimate $\mathbb{E}_{\lambda}\big[\big(\int_{0}^{T}\chi_{2}(t)\,dt\big)^{2k}\big]$,
$\mathbb{E}_{\lambda}\big[\big(\int_{0}^{T}\chi_{2}(t)\,d\langle W\rangle_{t}\big)^{2k}\big]$
and $\mathbb{E}_{\lambda}\big[\big(\int_{0}^{T}\chi_{2}(t)^{2}\,d\langle W\rangle_{t}\big)^{k}\big]$,
which can be done similarly to those of $\chi_{1}(t)$ in the above
using the established estimates (\ref{eq:-13}), (\ref{eq:-14}),
and (\ref{eq:-11}). 
\end{proof}
\begin{proof}[Proof of Theorem \ref{thm:}]
Let $E$ be the progressively measurable set defined by (\ref{eq:-40}).
By definition of $J(\cdot)$, we have 
\[
\begin{aligned} & \;\quad J(u^{\epsilon})-J(\bar{u})\\
 & =\mathbb{E}_{\lambda}\bigg\{\partial_{x}h(\bar{x}(T))\xi^{\epsilon}(T)+\Big(\int_{0}^{1}\theta\partial_{x}^{2}h\big(\bar{x}(T)+\theta\xi^{\epsilon}(T)\big)d\theta\Big)\xi^{\epsilon}(T)^{2}\\
 & \quad+\int_{0}^{T}\Big[\delta f_{1}(t)+\partial_{x}f_{1}\big(t,\bar{x}(t),u^{\epsilon}(t)\big)\xi^{\epsilon}(t)+\Big(\int_{0}^{1}\theta\partial_{x}^{2}f_{1}\big(t,\bar{x}(t)+\theta\xi^{\epsilon}(t),u^{\epsilon}(t)\big)d\theta\Big)\xi^{\epsilon}(t)^{2}\Big]dt\\
 & \quad+\int_{0}^{T}\Big[\delta f_{2}(t)+\partial_{x}f_{2}\big(t,\bar{x}(t),u^{\epsilon}(t)\big)\xi^{\epsilon}(t)+\Big(\int_{0}^{1}\theta\partial_{x}^{2}f_{2}\big(t,\bar{x}(t)+\theta\xi^{\epsilon}(t),u^{\epsilon}(t)\big)d\theta\Big)\xi^{\epsilon}(t)^{2}\Big]d\langle W\rangle_{t}\bigg\}.
\end{aligned}
\]
Notice that we have the following approximations 
\begin{align}
\xi^{\epsilon} & =y^{\epsilon}+z^{\epsilon}+\mathfrak{M}_{1}(E)\,\mathrm{o}(|I_{\epsilon}|)+\mathrm{o}\big(m_{1,\lambda}(I_{\epsilon};E)\big),\label{eq:-240}\\
(\xi^{\epsilon})^{2} & =(y^{\epsilon})^{2}+\mathfrak{M}_{1}(E)\,\mathrm{o}(|I_{\epsilon}|)+\mathrm{o}\big(m_{1,\lambda}(I_{\epsilon};E)\big),\label{eq:-241}\\
\Big(\int_{0}^{1} & \theta\varphi\big(t,\bar{x}(t)+\theta\xi^{\epsilon}(t),u^{\epsilon}(t)\big)d\theta\Big)(\xi^{\epsilon})^{2}=\frac{1}{2}\varphi(t)(y^{\epsilon})^{2}+\mathfrak{M}_{1}(E)\,\mathrm{o}(|I_{\epsilon}|)\nonumber \\
 & \hspace{20mm}+\mathrm{o}\big(m_{1,\lambda}(I_{\epsilon};E)\big),\;\;\text{for}\;\varphi=\partial_{x}^{2}h,\partial_{x}^{2}f_{1},\partial_{x}^{2}f_{2}.\label{eq:-242}
\end{align}
The approximation (\ref{eq:-240}) follows directly from (\ref{eq:-12}).
The approximation (\ref{eq:-241}) follows from $|(\xi^{\epsilon})^{2}-(y^{\epsilon})^{2}|\le(|\xi|^{\epsilon}+|y^{\epsilon}|)|\xi^{\epsilon}-y^{\epsilon}|$
together with (\ref{eq:-13}), (\ref{eq:-14}), and (\ref{eq:-11})
in Lemma \ref{lem:-2}. For (\ref{eq:-242}), in view of $\mathrm{supp}(\delta\varphi)\subseteq E_{\epsilon}=E\cap(I_{\epsilon}\times\Omega)$
and the boundedness of $\partial_{x}\varphi$, we have 
\[
\begin{aligned} & \;\quad\Big(\int_{0}^{1}\theta\varphi\big(t,\bar{x}(t)+\theta\xi^{\epsilon}(t),u^{\epsilon}(t)\big)d\theta\Big)(\xi^{\epsilon})^{2}\\
 & =\Big(\int_{0}^{1}\theta\varphi\big(t,\bar{x}(t),u^{\epsilon}(t)\big)d\theta\Big)(\xi^{\epsilon})^{2}+\mathrm{O}(|\xi^{\epsilon}|^{3})\\
 & =\Big(\int_{0}^{1}\theta\varphi(t)d\theta\Big)(\xi^{\epsilon})^{2}+\frac{1}{2}\delta\varphi(\xi^{\epsilon})^{2}+\mathrm{O}(|\xi^{\epsilon}|^{3})\\
 & =\frac{1}{2}\varphi(t)(\xi^{\epsilon})^{2}+\mathrm{O}(1_{E_{\epsilon}}|\xi^{\epsilon}|^{2})+\mathrm{O}(|\xi^{\epsilon}|^{3}).
\end{aligned}
\]
By $\int_{0}^{T}1_{E_{\epsilon}}(t)|\xi^{\epsilon}(t)|^{2}dt\le|I_{\epsilon}|\mathrm{e}_{T}\sup_{t\in[0,T]}\xi^{\epsilon}(t)\mathrm{e}_{t}^{-1}$
and Lemma \ref{lem:-2}, it is easily seen that 
\[
\mathbb{E}_{\lambda}\big(\int_{0}^{T}1_{E_{\epsilon}}(t)|\xi^{\epsilon}(t)|^{2}dt+\sup_{t\in[0,T]}|\xi^{\epsilon}(t)|^{3}\big)=\mathfrak{M}_{1}(E)\,\mathrm{o}(|I_{\epsilon}|)+\mathrm{o}\big(m_{1,\lambda}(I_{\epsilon};E)\big),
\]
which yields the approximation (\ref{eq:-242}). Therefore, 
\begin{equation}
\begin{aligned} & \;\quad J(u^{\epsilon})-J(\bar{u})\\
 & =\mathbb{E}_{\lambda}\bigg\{\partial_{x}h(\bar{x}(T))(y^{\epsilon}(T)+z^{\epsilon}(T))+\frac{1}{2}\partial_{x}^{2}h\big(\bar{x}(T)\big)y^{\epsilon}(T)^{2}\\
 & \quad+\int_{0}^{T}\Big[\delta f_{1}(t)+\partial_{x}f_{1}(t)(y^{\epsilon}(t)+z^{\epsilon}(t))+\frac{1}{2}\partial_{x}^{2}f_{1}(t)y^{\epsilon}(t)^{2}\Big]dt\\
 & \quad+\int_{0}^{T}\Big[\delta f_{2}(t)+\partial_{x}f_{2}(t)(y^{\epsilon}(t)+z^{\epsilon}(t))+\frac{1}{2}\partial_{x}^{2}f_{2}(t)y^{\epsilon}(t)^{2}\Big]d\langle W\rangle_{t}\bigg\}\\
 & \quad+\mathfrak{M}_{1}(E)\,\mathrm{o}(|I_{\epsilon}|)+\mathrm{o}\big(m_{1,\lambda}(I_{\epsilon};E)\big),
\end{aligned}
\label{eq:-243}
\end{equation}

Next, we transform the cost $\mathbb{E}_{\lambda}[\partial_{x}h(\bar{x}(T))(y^{\epsilon}(T)+z^{\epsilon}(T))]$
into a cumulative one. By (\ref{eq:-16}), 
\begin{equation}
\begin{aligned} & \;\quad\mathbb{E}_{\lambda}\big[-\partial_{x}h(\bar{x}(T))(y^{\epsilon}(T)+z^{\epsilon}(T))\big]\\
 & =\mathbb{E}_{\lambda}\big[p(T)(y^{\epsilon}(T)+z^{\epsilon}(T))\big]+\mathfrak{M}_{1}(E)\,\mathrm{o}(|I_{\epsilon}|)+\mathrm{o}\big(m_{1,\lambda}(I_{\epsilon};E)\big)\\
 & =\mathbb{E}_{\lambda}\Big\{\int_{0}^{T}\Big(\delta b_{1}(t)p(t)+\partial_{x}f_{1}(t)\big(y^{\epsilon}(t)+z^{\epsilon}(t)\big)+\frac{1}{2}\partial_{x}^{2}b_{1}(t)p(t)y^{\epsilon}(t)^{2}\Big)dt\\
 & \quad+\int_{0}^{T}\Big(\delta b_{2}(t)p(t)+\delta\sigma(t)q(t)+\partial_{x}f_{2}(t)\big(y^{\epsilon}(t)+z^{\epsilon}(t)\big)\\
 & \quad+\frac{1}{2}\big[\partial_{x}^{2}b_{2}(t)p(t)y^{\epsilon}(t)^{2}+\partial_{x}^{2}\sigma(t)q(t)\big]y^{\epsilon}(t)^{2}+\delta(\partial_{x}\sigma)(t)q(t)y^{\epsilon}(t)\Big)\,d\langle W\rangle_{t}\Big\}\\
 & \quad+\mathfrak{M}_{1}(E)\,\mathrm{o}(|I_{\epsilon}|)+\mathrm{o}\big(m_{1,\lambda}(I_{\epsilon};E)\big).
\end{aligned}
\label{eq:-239}
\end{equation}
Notice that the last integral term $\mathbb{E}_{\lambda}\big(\int_{0}^{T}\delta(\partial_{x}\sigma)(t)q(t)y^{\epsilon}(t)d\langle W\rangle_{t}\big)$
in the above is also of order $\mathfrak{M}_{1}(E)\,\mathrm{o}(|I_{\epsilon}|)+\mathrm{o}\big(m_{1,\lambda}(I_{\epsilon};E)\big)$.
To see this, by \citep[Theorem 3.5]{LQ16}, $\mathbb{E}_{\lambda}\big(\int_{0}^{T}q(t)^{2}\mathrm{e}_{t}d\langle W\rangle_{t}\big)$
is bounded. Therefore, for any $k\ge2$, in view of $\mathrm{supp}(\delta(\partial_{x}\sigma))\subseteq E_{\epsilon}$
and Lemma \ref{lem:-3}, 
\[
\begin{aligned} & \;\quad\Big|\mathbb{E}_{\lambda}\Big(\int_{0}^{T}\delta(\partial_{x}\sigma)(t)q(t)y^{\epsilon}(t)d\langle W\rangle_{t}\Big)\Big|\\
 & \le C_{k}\mathbb{E}_{\lambda}\Big(\int_{0}^{T}1_{E_{\epsilon}}(t)y^{\epsilon}(t)^{k}\mathrm{e}_{t}^{-1}d\langle W\rangle_{t}\Big)^{1/k}\\
 & \le C_{k}\mathbb{E}_{\lambda}\Big[\Big(\int_{0}^{T}1_{E_{\epsilon}}(t)d\langle W\rangle_{t}\Big)\Big(\sup_{t\in[0,T]}y^{\epsilon}(t)^{k}\mathrm{e}_{t}^{-1}\Big)\Big]^{1/k}\\
 & \le C_{k}\mathbb{E}_{\lambda}\Big[\Big(\int_{0}^{T}1_{E_{\epsilon}}(t)d\langle W\rangle_{t}\Big)^{2}\Big]^{1/2}\mathbb{E}_{\lambda}\Big[\Big(\sup_{t\in[0,T]}y^{\epsilon}(t)^{2k}\mathrm{e}_{t}^{-1}\Big)\Big]^{1/(2k)}\\
 & \le C_{k}m_{2,\lambda}(I_{\epsilon};E)^{1/2}[\mathfrak{M}_{1}(E)\,\mathrm{o}(|I_{\epsilon}|^{1/2})+\mathrm{o}\big(m_{k,\lambda}(I_{\epsilon};E)^{1/(2k)}\big)]\\
 & \le\mathfrak{M}_{1}(E)\,\mathrm{o}(|I_{\epsilon}|)+\mathrm{o}\big(m_{2,\lambda}(I_{\epsilon};E)^{1/2}\big)+\mathrm{o}\big(m_{k,\lambda}(I_{\epsilon};E)^{1/k}\big)\\
 & =\mathfrak{M}_{1}(E)\,\mathrm{o}(|I_{\epsilon}|)+\mathrm{o}\big(m_{1,\lambda}(I_{\epsilon};E)\big).
\end{aligned}
\]
Hence, the equality (\ref{eq:-239}) can be further written as 
\begin{equation}
\begin{aligned} & \;\quad\mathbb{E}_{\lambda}\big[-\partial_{x}h(\bar{x}(T))(y^{\epsilon}(T)+z^{\epsilon}(T))\big]\\
 & =\mathbb{E}_{\lambda}\bigg\{\int_{0}^{T}\Big[\delta b_{1}(t)p(t)+\partial_{x}f_{1}(t)\big(y^{\epsilon}(t)+z^{\epsilon}(t)\big)+\frac{1}{2}\partial_{x}^{2}b_{1}(t)p(t)y^{\epsilon}(t)^{2}\Big]dt\\
 & \quad+\int_{0}^{T}\Big[\delta b_{2}(t)p(t)+\delta\sigma(t)q(t)+\partial_{x}f_{2}(t)\big(y^{\epsilon}(t)+z^{\epsilon}(t)\big)\\
 & \quad+\frac{1}{2}\big[\partial_{x}^{2}b_{2}(t)p(t)y^{\epsilon}(t)^{2}+\partial_{x}^{2}\sigma(t)q(t)\big]y^{\epsilon}(t)^{2}\Big]\,d\langle W\rangle_{t}\bigg\}\\
 & \quad+\mathfrak{M}_{1}(E)\,\mathrm{o}(|I_{\epsilon}|)+\mathrm{o}\big(m_{1,\lambda}(I_{\epsilon};E)\big).
\end{aligned}
\label{eq:-244}
\end{equation}

Also, we transform $\mathbb{E}_{\lambda}[\partial_{x}^{2}h\big(\bar{x}(T)\big)y^{\epsilon}(T)^{2}]$
into a cumulative cost. By (\ref{eq:-17}), 
\begin{align*}
 & \;\quad\mathbb{E}_{\lambda}\big[-\partial_{x}^{2}h\big(\bar{x}(T)\big)y^{\epsilon}(T)^{2}\big]=\mathbb{E}_{\lambda}\big[P(T)y^{\epsilon}(T)^{2}\big]\\
 & =\mathbb{E}_{\lambda}\Big\{\int_{0}^{T}\big[\partial_{x}^{2}f_{1}(t)-\partial_{x}^{2}b_{1}(t)p(t)\big]y^{\epsilon}(t)^{2}dt\\
 & \quad+\int_{0}^{T}\Big(\big[\partial_{x}^{2}f_{2}(t)-\partial_{x}^{2}b_{2}(t)p(t)-\partial_{x}^{2}\sigma(t)q(t)\big]y^{\epsilon}(t)^{2}+\delta\sigma(t)^{2}P(t)\\
 & \quad+\big[2\partial_{x}\sigma(t)P(t)+Q(t)\big]\delta\sigma(t)y^{\epsilon}(t)\Big)\,d\langle W\rangle_{t}\Big\}.
\end{align*}
Similar to before, it can be shown that the term $\mathbb{E}_{\lambda}\big(\int_{0}^{T}[2\partial_{x}\sigma(t)P(t)+Q(t)]\delta\sigma(t)y^{\epsilon}(t)\,d\langle W\rangle_{t}\big)$
is of order $\mathfrak{M}_{1}(E)\,\mathrm{o}(|I_{\epsilon}|)+\mathrm{o}\big(m_{1,\lambda}(I_{\epsilon};E)\big)$.
Therefore, 
\begin{equation}
\begin{aligned} & \;\quad\mathbb{E}_{\lambda}\big[-\partial_{x}^{2}h\big(\bar{x}(T)\big)y^{\epsilon}(T)^{2}\big]\\
 & =\mathbb{E}_{\lambda}\Big\{\int_{0}^{T}\big[\partial_{x}^{2}f_{1}(t)-\partial_{x}^{2}b_{1}(t)p(t)\big]y^{\epsilon}(t)^{2}dt\\
 & \quad+\int_{0}^{T}\Big[\Big(\partial_{x}^{2}f_{2}(t)-\partial_{x}^{2}b_{2}(t)p(t)-\partial_{x}^{2}\sigma(t)q(t)\Big)y^{\epsilon}(t)^{2}+\delta\sigma(t)^{2}P(t)\Big]\,d\langle W\rangle_{t}\Big\}\\
 & \quad+\mathfrak{M}_{1}(E)\,\mathrm{o}(|I_{\epsilon}|)+\mathrm{o}\big(m_{1,\lambda}(I_{\epsilon};E)\big).
\end{aligned}
\label{eq:-245}
\end{equation}

Combining (\ref{eq:-243}), (\ref{eq:-244}), and (\ref{eq:-245}),
we arrive at 
\begin{equation}
\begin{aligned} & \;\quad J(u^{\epsilon})-J(\bar{u})\\
 & =\mathbb{E}_{\lambda}\Big[\int_{0}^{T}\Big(\delta f_{1}(t)-\delta b_{1}(t)p(t)\Big)dt\Big]\\
 & +\mathbb{E}_{\lambda}\Big[\int_{0}^{T}\Big(\delta f_{2}(t)-\delta b_{2}(t)p(t)-\delta\sigma(t)q(t)-\frac{1}{2}\delta\sigma(t)^{2}P(t)\Big)d\langle W\rangle_{t}\Big]\\
 & +\mathfrak{M}_{1}(E)\,\mathrm{o}(|I_{\epsilon}|)+\mathrm{o}\big(m_{1,\lambda}(I_{\epsilon};E)\big).
\end{aligned}
\label{eq:-41}
\end{equation}
We now show that the optimality of $\bar{u}$ and (\ref{eq:-41})
implies that
\begin{equation}
\left\{ \begin{array}{c}
\delta f_{1}(t)-\delta b_{1}(t)p(t)\ge0,\hspace{20mm}\hspace{20mm}\mathfrak{M}_{1}\text{-a.e.},\\
\delta f_{2}(t)-\delta b_{2}(t)p(t)-\delta\sigma(t)q(t)-\frac{1}{2}\delta\sigma(t)^{2}P(t)\ge0,\;\;\mathfrak{M}_{2}\text{-a.e.}
\end{array}\right.\label{eq:-42}
\end{equation}
By separability of $\mathbb{U}$ and the continuity of $H_{1},H_{2}$
in $u$, there exist progressively measurable processes $\bar{u}_{1}$
and $\bar{u}_{2}$ such that 
\[
\begin{aligned}H_{1}(t,\bar{x}(t),\bar{u}_{1}(t)) & =\max_{u\in\mathbb{U}}H_{1}(t,\bar{x}(t),u),\;\;\mathfrak{M}_{1}\text{-a.e.},\\
H_{2}(t,\bar{x}(t),\bar{u}_{2}(t)) & =\max_{u\in\mathbb{U}}H_{2}(t,\bar{x}(t),u),\;\;\mathfrak{M}_{2}\text{-a.e.}
\end{aligned}
\]
We first set $u_{1}=\bar{u}_{1},\,u_{2}=\bar{u}$. Then $\mathfrak{M}_{2}(E)=0$,
and therefore $m_{2,\lambda}(I_{\epsilon};E)=0$. Moreover, (\ref{eq:-41})
reduces to 
\[
J(u^{\epsilon})-J(\bar{u})=\mathbb{E}_{\lambda}\Big[\int_{0}^{T}\Big(\delta f_{1}(t)-\delta b_{1}(t)p(t)\Big)dt\Big]+\mathfrak{M}_{1}(E)\,\mathrm{o}(|I_{\epsilon}|),
\]
which clearly implies the first inequality in (\ref{eq:-42}). 

We now turn to the proof of the second inequality in (\ref{eq:-42}).
For any $a>0$, let 
\[
E_{a}=\{(t,\omega):H_{2}(t,\bar{x}(t),\bar{u}_{2}(t))-H_{2}(t,\bar{x}(t),\bar{u}(t))\ge a\}.
\]
Set $u_{1}=\bar{u},\,u_{2}=\bar{u}_{2}1_{E_{a}}+\bar{u}1_{E_{a}^{c}}$.
Then $E=E_{a}$ and $\mathfrak{M}_{1}(E)=0$. Therefore, (\ref{eq:-41})
reduces to 
\[
\begin{aligned}J(u^{\epsilon})-J(\bar{u}) & =\mathbb{E}_{\lambda}\Big[\int_{0}^{T}\Big(\delta f_{2}(t)-\delta b_{2}(t)p(t)-\delta\sigma(t)q(t)-\frac{1}{2}\delta\sigma(t)^{2}P(t)\Big)d\langle W\rangle_{t}\Big]\\
 & \quad+\mathrm{o}\big(m_{1,\lambda}(I_{\epsilon};E_{a})\big).
\end{aligned}
\]
By the definition of $E_{a}$ and $u_{2}$, we have 
\[
\delta f_{2}(t)-\delta b_{2}(t)p(t)-\delta\sigma(t)q(t)-\frac{1}{2}\delta\sigma(t)^{2}P(t)\le-a,\;\;\text{on}\;E_{a}.
\]
Therefore, 
\[
0\le J(u^{\epsilon})-J(\bar{u})\le-a\,m_{1,\lambda}(I_{\epsilon};E_{a})+\mathrm{o}\big(m_{1,\lambda}(I_{\epsilon};E_{a})\big),
\]
which clearly implies 
\[
\mathbb{E}_{\lambda}\Big(\int_{I_{\epsilon}}1_{E_{a}}(t,\omega)d\langle W\rangle_{t}\Big)=0,\;\;\text{for all}\;I_{\epsilon}\;\text{with}\;\epsilon\;\text{sufficiently small}.
\]
Therefore, $\mathfrak{M}_{2}(E_{a})=\mathbb{E}_{\lambda}\big(\int_{0}^{T}1_{E_{a}}(t,\omega)d\langle W\rangle_{t}\big)=0$
in view of the arbitrariness of $\{I_{\epsilon}\}_{\epsilon>0}$.
This completes the proof. 
\end{proof}

\section{\label{sec:}An example: linear regulator problem}

Let $\lambda\in\mathcal{P}(\mathbb{S})$ with $\lambda\ll\nu$ and
$a>0$, and take as the decision space $\mathbb{U}=\mathbb{R}$. We
consider the following \emph{linear regulator problem}, which has
wide applications in mathematical finance and engineering (see \citep[p. 23]{FR12}
and references therein):
\begin{equation}
\mathop{\mathrm{minimize}}_{u\in\mathcal{A}[0,1]}\;\mathbb{E}_{\lambda}\bigg(\frac{a}{2}\int_{0}^{1}u(t)^{2}dt+x(1)^{2}\bigg),\label{eq:-25}
\end{equation}
with
\begin{equation}
\left\{ \begin{aligned}dx(t) & =u(t)dt+u(t)d\langle W\rangle_{t}+u(t)dW_{t},\quad t\in(0,1],\\
x(0) & =1.
\end{aligned}
\right.\label{eq:-26}
\end{equation}

Suppose that $(\bar{x}(\cdot),\bar{u}(\cdot))$ is an optimal pair
of the problem (\ref{eq:-25}). The adjoint equations are
\begin{equation}
\left\{ \begin{aligned}dp(t) & =q(t)dW_{t},\quad t\in[0,1),\\
p(1) & =-2\bar{x}(1),
\end{aligned}
\right.\label{eq:-27}
\end{equation}
\begin{equation}
\left\{ \begin{aligned}dP(t) & =Q(t)dW_{t},\quad t\in[0,1),\\
P(1) & =-2.
\end{aligned}
\right.\label{eq:-28}
\end{equation}
Clearly, $P(t)=-2,\;Q(t)=0$ is the solution to  (\ref{eq:-28}).

The Hamiltonians are
\[
H_{1}(t,x,u)=up(t)-\frac{a}{2}u^{2},\;H_{2}(t,x,u)=u[p(t)+q(t)]-(u-\bar{u}(t))^{2}.
\]
Let $\mathfrak{M}_{1},\mathfrak{M}_{2}$ be the measures on $[0,\infty)\times\Omega$
given by (\ref{eq:-36}) and (\ref{eq:-37}), and $\mathfrak{M}=\mathfrak{M}_{1}+\mathfrak{M}_{2}$.
By Theorem \ref{thm:},
\[
\bar{u}(t)\frac{d\mathfrak{M}_{1}}{d\mathfrak{M}}=\frac{p(t)}{a}\frac{d\mathfrak{M}_{1}}{d\mathfrak{M}},
\]
and
\[
\bar{u}(t)\frac{d\mathfrak{M}_{2}}{d\mathfrak{M}}=\frac{p(t)+q(t)+2\bar{u}(t)}{2}\frac{d\mathfrak{M}_{2}}{d\mathfrak{M}},
\]
which implies that
\[
q(t)=-p(t),\quad\mathfrak{M}_{2}\text{-a.e.}
\]
It follows from the above and (\ref{eq:-27}) that
\begin{equation}
p(t)=p(0)\exp\Big(-W_{t}-\frac{1}{2}\langle W\rangle_{t}\Big),\quad t\in[0,1].\label{eq:-35}
\end{equation}

Therefore, $(\bar{x}(\cdot),\bar{u}(\cdot),p(0))$ is given by the
system
\begin{equation}
\left\{ \begin{aligned}d\bar{x}(t) & =\frac{p(t)}{a}dt+\bar{u}(t)d\langle W\rangle_{t}+\bar{u}(t)dW_{t},\quad t\in(0,1],\\
\bar{x}(0) & =1,\quad\bar{x}(1)=-\frac{1}{2}p(1),
\end{aligned}
\right.\label{eq:-34}
\end{equation}
where $p(\cdot)$ is given by (\ref{eq:-35}). Note that, compared
to BSDEs, the system (\ref{eq:-34}) takes the random variable $p(0)$
as a part of its solution so that the additional condition $\bar{x}(0)=1$
is satisfied. Therefore, (\ref{eq:-34}) is not a simple SDE or BSDE
but a forward\textendash backward type SDE. 

We now look for a solution to  the form $\bar{x}(t)=\theta(t)p(t)$,
where $\theta(t)$ is a process of the form
\[
\left\{ \begin{aligned}d\theta(t) & =\xi_{1}(t)dt+\xi_{2}(t)d\langle W\rangle_{t}+\eta(t)dW_{t},\quad t\in[0,1),\\
\theta(1) & =-\frac{1}{2}.
\end{aligned}
\right.
\]
By Itô's formula,
\[
d\bar{x}(t)=\xi_{1}(t)p(t)dt+[\xi_{2}(t)-\eta(t)]p(t)d\langle W\rangle_{t}+[\eta(t)-\theta(t)]p(t)dW_{t},\quad t\in(0,1].
\]
Comparing the above with (\ref{eq:-34}) gives that
\[
\xi_{1}(t)p(t)\frac{d\mathfrak{M}_{1}}{d\mathfrak{M}}=\frac{p(t)}{a}\frac{d\mathfrak{M}_{1}}{d\mathfrak{M}},
\]
\[
[\xi_{2}(t)-\eta(t)]p(t)\frac{d\mathfrak{M}_{2}}{d\mathfrak{M}}=\bar{u}(t)\frac{d\mathfrak{M}_{2}}{d\mathfrak{M}}=[\eta(t)-\theta(t)]p(t)\frac{d\mathfrak{M}_{2}}{d\mathfrak{M}}.
\]
Therefore, $\xi_{1}(t)=\frac{1}{a}\;\;\mathfrak{M}_{1}\text{-a.e.}$
and $\xi_{2}(t)=2\eta(t)-\theta(t)\;\;\mathfrak{M}_{2}\text{-a.e.}$
Furthermore, $\theta(t)$ is given by the BSDE
\begin{equation}
\left\{ \begin{aligned}d\theta(t) & =\frac{1}{a}dt+[2\eta(t)-\theta(t)]d\langle W\rangle_{t}+\eta(t)dW_{t},\quad t\in[0,1),\\
\theta(1) & =-\frac{1}{2},
\end{aligned}
\right.\label{eq:-32}
\end{equation}
of which a unique solution $(\theta(\cdot),\eta(\cdot))$ exists (cf.
\citep[Theorem 3.10]{LQ16}).

For the moment, let us assume that $\theta(0)<0$. Then by $\bar{x}(0)=\theta(0)p(0)=1$,
we have that $p(0)=1/\theta(0)$ and
\begin{equation}
p(t)=\frac{1}{\theta(0)}\exp\Big(-W_{t}-\frac{1}{2}\langle W\rangle_{t}\Big).\label{eq:-33}
\end{equation}
The optimal pair $(\bar{x}(\cdot),\bar{u}(\cdot))$ is given by
\begin{equation}
\bar{u}(t)=\frac{p(t)}{a}\frac{d\mathfrak{M}_{1}}{d\mathfrak{M}}+[\eta(t)-\theta(t)]p(t)\frac{d\mathfrak{M}_{2}}{d\mathfrak{M}},\label{eq:-29}
\end{equation}
and
\begin{equation}
\left\{ \begin{aligned}d\bar{x}(t) & =\frac{p(t)}{a}dt+[\eta(t)-\theta(t)]p(t)d\langle W\rangle_{t}+[\eta(t)-\theta(t)]p(t)dW_{t},\quad t\in(0,1],\\
\bar{x}(0) & =1,
\end{aligned}
\right.\label{eq:-30}
\end{equation}
where $(\theta(\cdot),\eta(\cdot))$ and $p(\cdot)$ are given by
(\ref{eq:-32}) and (\ref{eq:-33}).

It remains to show that $\theta(0)<0$. Let
\[
\Phi(t)=\exp\Big(-2W_{t}-\langle W\rangle_{t}\Big),\quad t\in[0,1].
\]
By Itô's formula,
\[
d\Phi(t)=\Phi(t)d\langle W\rangle_{t}-2\Phi(t)dW_{t}.
\]
Therefore,
\[
d[\Phi(t)\theta(t)]=\frac{\Phi(t)}{a}dt+\Phi(t)[\eta(t)-2\theta(t)]dW_{t},
\]
which implies that $\Phi(t)\theta(t)-\frac{1}{a}\int_{0}^{t}\Phi(r)dr$
is a martingale. Therefore,
\[
\begin{aligned}\Phi(t) & \theta(t)-\frac{1}{a}\int_{0}^{t}\Phi(r)dr\\
 & =\mathbb{E}_{\lambda}\bigg(\Phi(1)\theta(1)-\frac{1}{a}\int_{0}^{1}\Phi(r)dr\,\bigg|\,\mathcal{F}_{t}\bigg)\\
 & =-\mathbb{E}_{\lambda}\bigg(\frac{\Phi(1)}{2}+\frac{1}{a}\int_{0}^{1}\Phi(r)dr\,\bigg|\,\mathcal{F}_{t}\bigg),
\end{aligned}
\]
which gives that
\begin{equation}
\theta(t)=-\Phi(t)^{-1}\mathbb{E}_{\lambda}\bigg(\frac{\Phi(1)}{2}+\frac{1}{a}\int_{t}^{1}\Phi(r)dr\,\bigg|\,\mathcal{F}_{t}\bigg).\label{eq:-31}
\end{equation}
This, together with the fact that $\Phi(t)>0$, shows that $\theta(0)<0$.

\section*{Acknowledgements}

The author would like to thank the anonymous referees for providing
many useful comments and suggestions, which helps improve the quality
of the current paper. 

\noindent \renewcommand\bibname{References}

\noindent  

\begin{thebibliography}{}
\bibitem[Ausloos et~al., 2002]{AIV02} Ausloos, M., Ivanova, K., and Vandewalle, N. (2002). \newblock Crashes: Symptoms, diagnoses and remedies. \newblock In {\em Empirical Science of Financial Fluctuations}, pages 62--76.   Springer.
\bibitem[Dong and Wang, 2014]{DW14} Dong, Y. and Wang, J. (2014). \newblock Complex system analysis of market return percolation model on   {S}ierpinski carpet lattice fractal. \newblock {\em Journal of Systems Science and Complexity}, 27(4):743--759.
\bibitem[Fleming and Rishel, 2012]{FR12} Fleming, W.~H. and Rishel, R.~W. (2012). \newblock {\em Deterministic and Stochastic Optimal Control}, volume~1. \newblock Springer Science \& Business Media.
\bibitem[Fukushima et~al., 2010]{FOT10} Fukushima, M., Oshima, Y., and Takeda, M. (2010). \newblock {\em Dirichlet Forms and Symmetric Markov Processes}, volume~19 of   {\em De Gruyter Studies in Mathematics}. \newblock De Gruyter, 2nd edition.
\bibitem[Kigami, 2001]{Ki01} Kigami, J. (2001). \newblock {\em Analysis on Fractals}, volume 143 of {\em Cambridge Tracts in   Mathematics}. \newblock Cambridge University Press.
\bibitem[Kusuoka, 1989]{Ku89} Kusuoka, S. (1989). \newblock {D}irichlet forms on fractals and products of random matrices. \newblock {\em Publications of the Research Institute for Mathematical   Sciences}, 25(4):659--680.
\bibitem[Liu and Qian, 2017]{LQ16} Liu, X. and Qian, Z. (2017). \newblock Backward problems for stochastic differential equations on the   {S}ierpinski gasket. \newblock {\em Stochastic Processes and Their Applications}.
\bibitem[Mandelbrot, 2013]{Man13} Mandelbrot, B.~B. (2013). \newblock {\em Fractals and Scaling in Finance: Discontinuity, Concentration,   Risk. Selecta Volume E}. \newblock Springer Science \& Business Media.
\bibitem[Peng, 1990]{Pen90} Peng, S. (1990). \newblock A general stochastic maximum principle for optimal control problems. \newblock {\em SIAM Journal on Control and Optimization}, 28(4):966--979.
\bibitem[Yong and Zhou, 1999]{Yong99} Yong, J. and Zhou, X. (1999). \newblock {\em Stochastic {C}ontrols: {H}amiltonian {S}ystems and {HJB}   {E}quations}, volume~43. \newblock Springer Science \& Business Media.
\end{thebibliography}
\end{document}